\documentclass[sn-mathphys-num]{sn-jnl}


\usepackage{graphicx}%
\usepackage{multirow}%
\usepackage{amsmath,amssymb,amsfonts,bbm}%
\usepackage{amsthm}%
\usepackage{mathrsfs}%
\usepackage[title]{appendix}%
\usepackage{xcolor}%
\usepackage{textcomp}%
\usepackage{manyfoot}%
\usepackage{booktabs}%
\usepackage{algorithm}%
\usepackage{algorithmicx}%
\usepackage{algpseudocode}%
\usepackage{listings}%


\theoremstyle{thmstyleone}%
\newtheorem{theorem}{Theorem}
\newtheorem{proposition}[theorem]{Proposition}%
\newtheorem{corollary}[theorem]{Corollary}

\theoremstyle{thmstyletwo}%
\newtheorem{example}{Example}%
\newtheorem{remark}{Remark}%

\theoremstyle{thmstylethree}%

\raggedbottom

\begin{document}
\newcommand {\eps} {\varepsilon}
\newcommand {\Z} {\mathbbm{Z}}
\newcommand {\R} {\mathbbm{R}}
\newcommand {\T} {\mathbbm{T}}
\newcommand {\Q} {\mathbbm{Q}}
\newcommand {\N} {\mathbbm{N}}
\newcommand {\C} {\mathbbm{C}}
\newcommand {\K} {\mathbbm{K}}
\newcommand {\I} {\mathbbm{I}}
\newcommand {\dist} {{\rm{dist}}}
\newcommand {\cl}{\mathrm{cl}}
\newcommand {\PP} {\mathbbm{P}}
\newcommand {\ang} {\measuredangle}
\newcommand {\e} {{\rm{e}}}
\newcommand {\rank} {{\rm{rank}}}
\newcommand {\Span} {{\mathrm{span}}}
\newcommand {\card} {{\rm{card}}}
\newcommand {\ED} {\mathrm{ED}}
\newcommand {\cA} {\mathcal{A}}
\newcommand {\cO} {\mathcal{O}}
\newcommand {\cF} {\mathcal{F}}
\newcommand {\cC} {\mathcal{C}}
\newcommand {\cN} {\mathcal{N}}
\newcommand {\cV} {\mathcal{V}}
\newcommand {\cG} {\mathcal{G}}
\newcommand {\cB} {\mathcal{B}}
\newcommand {\cD} {\mathcal{D}}
\newcommand {\cP} {\mathcal{P}}
\newcommand {\cQ} {\mathcal{Q}}
\newcommand {\cW} {\mathcal{W}}
\newcommand {\cT} {\mathcal{T}}
\newcommand {\cI} {\mathcal{I}}
\newcommand {\Sn}[1] {\mathcal{S}^{#1}}
\newcommand {\range} {\mathcal{R}}
\newcommand {\kernel} {\mathcal{N}}
\newcommand{\one}{\mathbb{1}}
\renewcommand{\thefootnote}{\fnsymbol{footnote}}
\newcommand{\rle}{\rotatebox[origin=c]{-90}{$\le$}}
\newcommand{\rl}{\rotatebox[origin=c]{-90}{$<$}}
\newcommand{\rg}{\rotatebox[origin=c]{-90}{$=$}}

\title[On  generalized metrics of Vandermonde type]{On  generalized metrics of Vandermonde type}


\author*[1]{\fnm{Wolf-J\"urgen} \sur{Beyn}}\email{beyn@math.uni-bielefeld.de}



\affil*[1]{\orgdiv{Department of Mathematics}, \orgname{Bielefeld University}, \orgaddress{\street{POBox 100131}, \city{Bielefeld}, \postcode{D-33615},  \country{Germany}}}




\abstract{ In a series of papers \cite{Gaehler63,Gaehler64,Gaehler69},  S. G\"ahler defined  and investigated so-called $m$-metric spaces and
  their topological properties. An $m$-metric assigns to any  tuple of $m+1$ elements a real value (more generally
  an element in a partially odered set)  which
  satisfies the generalized metric axioms of semidefiniteness, symmetry, and simplex inequality. In this contribution we consider a
  new type of generalized metric which is based on the Vandermonde determinant.
  We present some remarkable  geometric consequences of the corresponding simplex inequality in the complex plane.
  Then we show that the Vandermonde principle of construction extends to linear spaces of arbitrary dimension by using symmetric multilinear maps  of degree $\frac{1}{2}m(m+1)$.  In particular, we analyze when this
  generalized metrics has the stronger property of definiteness.  Finally, an application is provided to the $m$-metric
  of point sets  when driven by the same linear ordinary differential equation.}

\keywords{ generalized metric, Vandermonde determinant, symmetric multilinear maps}



\maketitle

\section{Introduction}
\label{intro}
For the purpose of this paper we slightly deviate from the notion of an $m$-metric as established in \cite{Gaehler69}.
Given a set $X$ and a number $n \ge 2$, we call a map
\begin{align*} d:X^n=\prod_{i=1}^n X \to \R
\end{align*}
a {\it pseudo $n$-metric on $X$} if it has the following properties:
  \begin{itemize}
    \item[(\rm{$M_1$})] (Semidefiniteness) 
    If $x=(x_1,\ldots,x_n)\in X^n$ satisfies $x_i=x_j$ for some $i\neq j$, then
    $d(x_1,\ldots,x_n)=0$.
  \item[(\rm{$M_2$})] (Symmetry) 
    For all $x=(x_1,\ldots,x_n)\in X^n$ and all 
      $\pi\in \mathcal{P}_n$
      \begin{equation} \label{perm}
        d(x_{\pi(1)},\ldots,x_{\pi(n)})= d(x_1,\ldots,x_n).
      \end{equation}
      Here $\mathcal{P}_n$ denotes the group of permutations of $\{1,\ldots,n\}$.
    \item[(\rm{$M_3$})] (Simplex inequality)
      For all $x=(x_1,\ldots,x_n)\in X^n$ and $y \in X$
      \begin{equation} \label{tri}
        d(x_1,\ldots,x_n) \le \sum_{i=1}^n d(x_1,\ldots,x_{i-1},y,x_{i+1},\ldots,x_n).
      \end{equation}
      \end{itemize}
According to this definition a pseudo $2$-metric  agrees with the standard notion of an ordinary
pseudo (or quasi) metric. Then, as in case $n=2$,  we call a pseudo $n$-metric an $n$-metric if it is definite, i.e. 
 if $ d(x_1,\ldots,x_n) = 0$ implies $x_i=x_j$ for some $i \neq j$.
  Note that our definition of a   pseudo $n$-metric corresponds to a quasi $(m=n-1)$-metric in the sense of
  \cite[I, Ch.2.1]{Gaehler69}. There the quasi $m$-metric is called an $m$-metric
  if the following additional condition is satisfied for $n=m+1$
  \begin{itemize}
    \item[(\rm{$M_4$})]
      For all $i \neq j$ in $\{1,\ldots,n\}$ and  $x_i\neq x_j$ in $X$ there exist elements $x_k\in X$ for $k\neq i,j$
      such that $d(x_1,\ldots,x_n) \neq 0$.
  \end{itemize}
  This is a rather weak condition suitable to establish Hausdorff properties of the topology induced on $X$ ( see \cite[II, Satz 25]{Gaehler69}). We prefer to reserve the name  $n$-metric for a pseudo $n$-metric which is definite in the sense above. Let us further note  that the notion of an $n$-distance
  is used
  in the papers \cite{Marichal18,Marichal23} for functions $d$ which satisfy ($M_1$)-($M_3$) and
  which are called definite if $d(x_1,\ldots,x_n)=0$ implies  $x_1=\cdots = x_n$.

   The most common example of an $n$-metric is obtained when $d(x_1,\ldots,x_n)$  is taken as the volume
   of the simplex (or up to a factor of a parallelepiped) with vertices $x_1,\ldots,x_n$ in some linear space. For this case the metric properties ($M_1$)-($M_3$)
   were observed as early as 1928 by K. Menger in \cite{Menger28}. The value $d(x_1,\ldots,x_n)$ then vanishes
   if and only if the spanning vectors $x_2-x_1,\ldots, x_n- x_1$ are linearly dependent.
   This is just one example of  a standard construction of a  pseudo $n$-metric via a so-called $(m=n-1)$-normed
   space through $d(x_1,\ldots,x_n) = \|(x_2-x_1,\ldots,x_n-x_1)\|$; see \cite[I, Satz 5]{Gaehler69}. The theory of
   $m$-normed spaces has received considerable  attention. For example,
   the  isometry problem for $m$-norms is analyzed in \cite{huangtan18,huangtan19},
   applications to fixed point problems appear in 
   \cite{VVKR15} and several related papers cited therein, and generalizations to vector spaces over valued fields appear in \cite{Schwaiger23}. 
  Further, several geometric constructions leading to $n$-metrics (or $n$-distances) are presented  in \cite{Marichal18,Marichal20,Marichal23} with an emphasis on optimal constants for the simplex inequality.
   The papers \cite{Marichal18,Marichal23} also provide applications to $n$-distances in graphs
   based on spanning trees.
   
 In \cite{Be24} we re-discovered some of these constructions, but also detected some new pseudo $n$-metrics,
 for example  on the Grassmann and the Stiefel manifold, and on hypergraphs.
 There we also found the following $n$-metric in $\R$ which, in a sense, is the minimal expression
 leading to definiteness according to our definition  above:
 \begin{equation} \label{s1:e1}
   d_V(x_1,\ldots,x_n)= \prod_{1 \le j <i \le n} |x_i-x_j|.
 \end{equation}
 We call $d_V$ the Vandermonde $n$-metric since 
 $d_V(x_1,\ldots,x_n)=|V(x_1,\ldots,x_n)|$ holds with the Vandermonde determinant
 \begin{equation*} \label{s1:e2}
      V(x_1,\ldots,x_n) = \det \begin{pmatrix} 1 & \cdots & 1 \\ \vdots & \cdots & \vdots \\
   x_1^{n-1} & \cdots & x_n^{n-1} \end{pmatrix}.
 \end{equation*}
 Note that the Vandermonde $n$-metric does not derive from an $(n-1)$-norm in the way
 described above.
 
 The purpose of this contribution is to show that this expression has far-reaching generalizations
 which lead to a whole family of pseudo $n$-metrics in linear spaces of arbitrary dimension.

 In Section \ref{s2} we study the extension of \eqref{s1:e1} to the complex plane $\C$ where
 the  simplex inequality follows in a similar manner as in the real case \cite[Example 1]{Be24}.
 Despite its simplicity the simplex inequality for this case has  some interesting
 geometric consequences for cyclic $n$-gons, i.e. $n$-gons with vertices on a circle
 of radius $R$.
 We mention the following (see Corollaries  \ref{s2:cor1}, \ref{cor2:3-metric}
 and Figure  \ref{simplicial4})
 \begin{itemize}
 \item[I.] The sides $a,b,c$ of a triangle with  circumradius $R$ satisfy 
      \begin{equation} \label{s1:e3}
     abc \le R^2(a+b+c).
   \end{equation}
      Equality holds if and only if the triangle is equilateral.
    \item[II.] The sides $a,b,c,d$ and diagonals $e,f$ of a cyclic quadrangle
with circumradius  $R$  satisfy
      \begin{equation} \label{s1:e3a}
        abcdef \le R^3\left[(ab+cd)e + (ad+bc)f\right].
      \end{equation}
      Equality holds if and only if the quadrangle is equilateral.
       \item[III.] The expression
   \begin{equation} \label{s1:e4}
     d(x_1,x_2,x_3)= \|x_1-x_2\| \|x_1-x_3\| \|x_2-x_3\|, \quad x_1,x_2,x_3 \in X
   \end{equation}
   defines a $3$-metric in any Euclidean space $(X,\langle \cdot,\cdot \rangle, \| \cdot \|)$.
 \end{itemize}
 
 \begin{figure}[hbtp]
\begin{minipage}{0.4\textwidth}
\includegraphics[width=\textwidth]{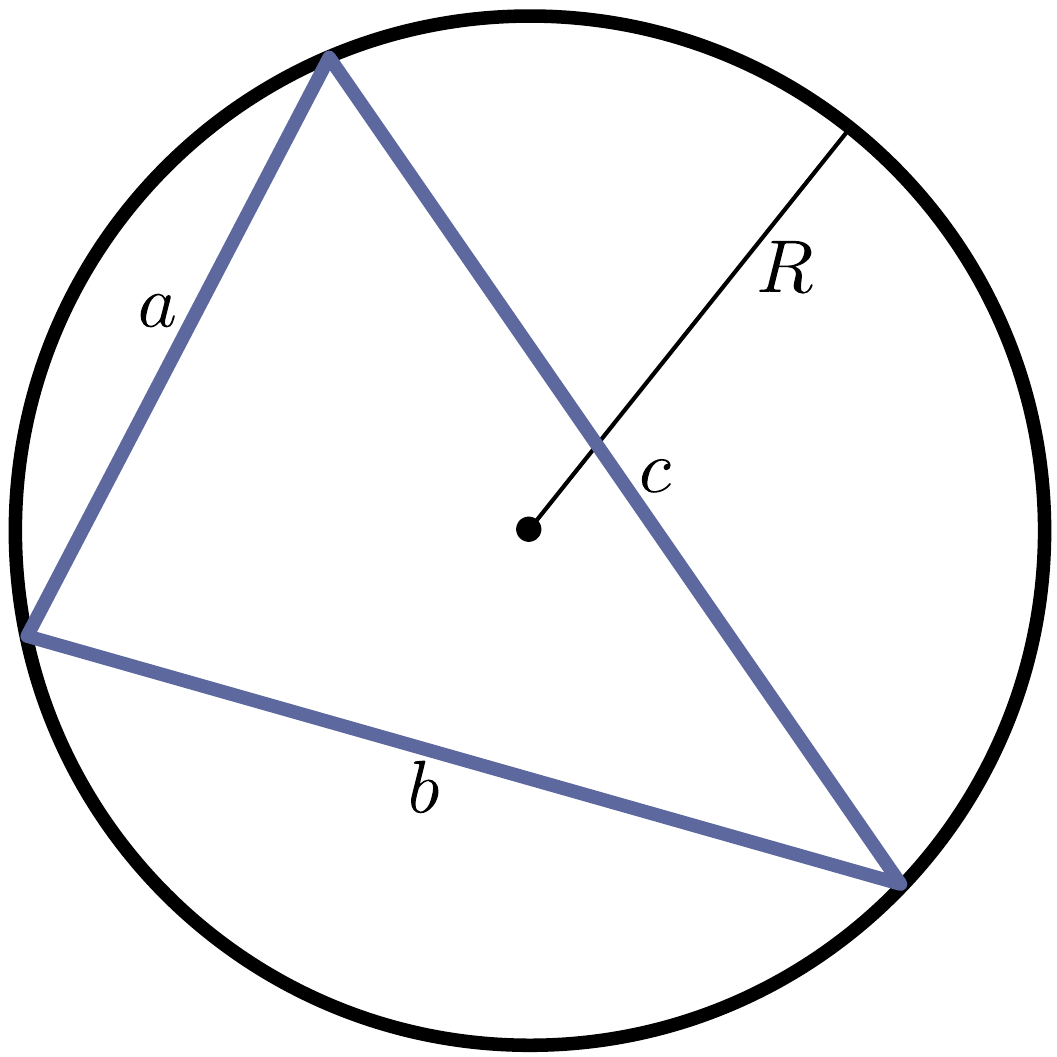}   
\end{minipage}
\hfill
\begin{minipage}{0.4\textwidth}
\includegraphics[width=\textwidth]{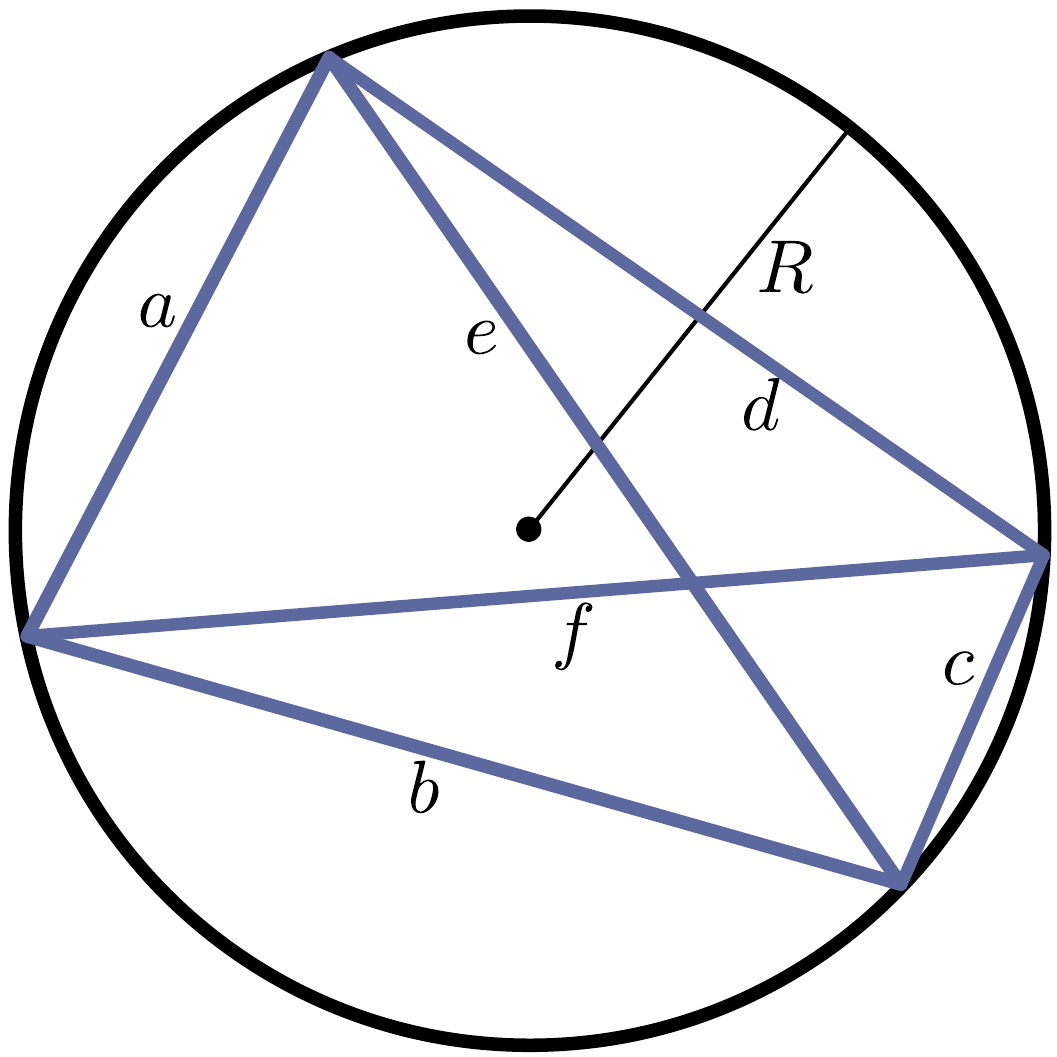}   
\end{minipage}
\caption{\label{simplicial4} The simplex inequality $abc \le R^2(a+b+c)$ for the Vandermonde $3$-metric (left) and $abdcef \le R^3(abe+cde+adf+bcf)$ for
  the Vandermonde $4$-metric (right).}
\end{figure}
 Inequality \eqref{s1:e3} is equivalent to Euler's inequality $2r \le R$ for the inradius $r$ by the familiar formula $2rR=\frac{abc}{a+b+c}$; see e.g. \cite{SV12}. To the best of
 our knowledge, inequality
 \eqref{s1:e3a} has gone unnoticed in the literature on estimates for cyclic quadilaterals. It will be of interest to find an elementary geometric proof  which
 avoids the use of complex numbers.
 
  In Section \ref{sec3}  we show that the Vandermonde $n$-metric can be generalized to arbitrary vector spaces
  $X$ by the formula
  \begin{equation} \label{s1:e5}
    d(x_1,\ldots,x_n) = \left\|A\left( (x_i-x_j)_{1 \le j < i \le n}\right)\right\|,
  \end{equation}
  where $A$ is a $\frac{1}{2}n(n-1)$-linear and symmetric form on $X$ with values in some
    normed space $(Y,\|\cdot\|)$.
    While definiteness is obvious for the scalar case \eqref{s1:e1},  it may be lost in the general setting
   of  \eqref{s1:e5}. We discuss
    a particular multilinear map $A$ for which definiteness is preserved.

    Finally, we emphasize that, adding to its geometric and topological features, an $n$-metric may also
    play a role as a measure of the point cluster $\{x_1,\ldots,x_n\}$, in particular
    when the points are driven by a dynamical system.  A simple estimate supporting this
    viewpoint is provided in Section \ref{sec3.4}.


\section{ The Vandermonde $n$-metric in $\C$}\label{s2}
 In this section we consider the basic construction of an $n$-metric in $\C$ via the well-known Vandermonde determinant associated with numbers $z_j \in \C$, $j=1,\ldots,n$ (see \cite[Ch.0.9]{HJ2013}):
\begin{equation} \label{CdefVandermond}
  V(z_1,\ldots,z_n) = \det \begin{pmatrix}
    1 & \cdots & 1 \\
    \vdots & \cdots & \vdots \\
    z_1^{n-1} & \cdots & z_n^{n-1} \end{pmatrix}=
  \prod_{1 \le j< i \le n} (z_i-z_j).
\end{equation}
\begin{theorem}(Vandermonde $n$-metric) \label{Cprop}
    The expression
    \begin{equation} \label{Cprodn}
      d_V(z_1,\ldots,z_n) = \prod_{1\le j< i \le n}|z_i-z_j|
      =|V(z_1,\ldots,z_n)|
    \end{equation}
    defines an $n$-metric in $\C$ and by restriction also in $\R$.
         \end{theorem}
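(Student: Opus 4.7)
Properties ($M_1$) and ($M_2$) are essentially free from the product structure of \eqref{Cprodn}: if two arguments coincide the corresponding factor vanishes, and any permutation of $(z_1,\ldots,z_n)$ merely rearranges the factors (up to signs that disappear under absolute values). Definiteness is equally immediate, since a product of non-negative reals equals zero only when one factor does, i.e.\ $|z_i-z_j|=0$ for some $i\neq j$.

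The heart of the matter is the simplex inequality ($M_3$). First, if the $z_i$ are not pairwise distinct, the left-hand side of \eqref{tri} already vanishes and there is nothing to prove. So I would assume all $z_i$ are distinct and exploit the Lagrange interpolation identity for the constant polynomial $1$: for every $y\in\C$,
\begin{equation*}
  1 \;=\; \sum_{i=1}^n \prod_{\substack{k=1\\k\neq i}}^n \frac{y-z_k}{z_i-z_k}.
\end{equation*}
Taking absolute values and applying the ordinary triangle inequality in $\C$ yields
\begin{equation*}
  1 \;\le\; \sum_{i=1}^n \prod_{k\neq i} \frac{|y-z_k|}{|z_i-z_k|}.
\end{equation*}

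Now I would multiply this estimate by $d_V(z_1,\ldots,z_n)=\prod_{1\le j<i\le n}|z_i-z_j|$. The factors involving index $i$ in $d_V$ are exactly $\prod_{k\neq i}|z_i-z_k|$, so they cancel with the denominator in the $i$-th summand, leaving the product over pairs $(j,k)$ with $j<k$ and $j,k\neq i$. That remaining product is precisely $d_V$ of the reduced $(n-1)$-tuple, and multiplying it by the surviving numerator $\prod_{k\neq i}|y-z_k|$ reproduces $d_V(z_1,\ldots,z_{i-1},y,z_{i+1},\ldots,z_n)$. Summing over $i$ gives \eqref{tri}.

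Once the idea of using Lagrange interpolation is in place, the computation is essentially bookkeeping, so the only real obstacle is seeing that the constant polynomial suffices (higher-degree choices would introduce unwanted extra terms). This is also the proof strategy that generalizes cleanly to the multilinear setting of \eqref{s1:e5} in Section~\ref{sec3}, since the Lagrange identity is independent of the target space in which the factors ultimately live.
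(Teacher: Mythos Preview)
Your proof is correct and essentially identical to the paper's: the paper obtains the same identity by solving the Vandermonde system \eqref{s1:linsyst} via Cramer's rule and reading off $\sum_i a_i=1$ from the first row, which is precisely your Lagrange interpolation of the constant polynomial $1$ (indeed $a_i=\prod_{k\neq i}(y-z_k)/(z_i-z_k)$). The subsequent multiplication by $|V(z_1,\ldots,z_n)|$ and application of the triangle inequality are the same in both arguments.
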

\begin{proof} According to Proposition \ref{prop1.1}(i) below it suffices to consider \eqref{Cprodn}
  in $\C$.
  By the definition \eqref{Cprodn} it is clear that $d_V$ is nonnegative, semidefinite, and even definite. The symmmetry is a consequence of the following equality for $\pi \in \mathcal{P}_n$:
  \begin{equation*}
    d_V(z_1,\ldots,z_n)^2 = \prod_{i \neq j}|z_i-z_j|= \prod_{\pi(i)\neq \pi(j)}
    |z_{\pi(i)}-z_{\pi(j)}|= d_V(z_{\pi(1)},\ldots,z_{\pi(n)})^2.
    \end{equation*}
  First note that it  suffices to show the simplex inequality for pairwise different numbers $z_1,\ldots,z_n$ (otherwise the left-hand side of \eqref{tri} vanishes). Then there is a unique
    solution $(a_1,\ldots,a_n)^{\top}\in \C^n$ to the linear system
    \begin{align} \label{s1:linsyst}
\begin{pmatrix}
    1 & \cdots & 1 \\
    \vdots & \cdots & \vdots \\
    z_1^{n-1} & \cdots & z_n^{n-1} \end{pmatrix} \begin{pmatrix}a_1 \\ \vdots \\ a_n
\end{pmatrix} = \begin{pmatrix}1 \\ \vdots \\  y^{n-1} \end{pmatrix}.
    \end{align}
    The solution is  given by Cramer's rule as follows
    \begin{align}\label{s2:Cramer}
  a_i = \frac{V(z_1,\ldots,z_{i-1},y,z_{i+1},\ldots,z_n)}{V(z_1,\ldots,z_n)},
  \quad i=1,\ldots,n,
    \end{align}
    with the Vandermonde determinant from \eqref{CdefVandermond}.
    Using this we estimate
    \begin{equation} \label{Vest}
    \begin{aligned}
     & d_V(z_1,\ldots,z_n) = |V(z_1,\ldots,z_n)\sum_{i=1}^n a_i|
       = |\sum_{i=1}^n V(z_1,\ldots,z_{i-1},y,z_{i+1},\ldots,n)|\\
      & \quad \le \sum_{i=1}^n |V(z_1,\ldots,z_{i-1},y,z_{i+1},\ldots,n)|
      = \sum_{i=1}^n d_V(z_1,\ldots,z_{i-1},y,z_{i+1},\ldots,n),
    \end{aligned}
    \end{equation}
    which completes the proof.
     \end{proof}
\begin{remark}
  The  proof yields the following more general
  inequality
  \begin{align} \label{eq2:gsi}
    |y|^k d_V(z_1,\ldots,z_n) \le \sum_{i=1}^n |z_i|^k d_V(z_1,\ldots,z_{i-1},y,z_{i+1},
    \ldots,z_n), \; k=0,\ldots,n-1.
  \end{align}
  Another observation is that $d_V$ is invariant under shifts and positive homogeneous of degree $\frac{n(n-1)}{2}$ with respect to multiplication
    by a complex number $y \in \C$
    \begin{equation*} \label{eq2:dVprop}
      \begin{aligned}
        d_V(z_1,\ldots,z_n)& = d_V(z_1-y,\ldots, z_n-y), \\
        d_V(y z_1,\ldots,y z_n)& = |y|^{\frac{n(n-1)}{2}}d_V(z_1,\ldots,z_n).
      \end{aligned}
    \end{equation*}
    Further, note that we obtain an $n$-metric  which is homogeneous of degree $1$ by setting
    \begin{equation} \label{eq2:rootmetric}
      d_*(z_1,\ldots,z_n)=d_V(z_1,\ldots,z_n)^{\frac{2}{n(n-1)}},
    \end{equation}
     since  the map $\lambda>0 \mapsto \lambda^q$ is sublinear for $0\le q \le 1$.
\end{remark}
In case $n=3$ we investigate when the simplex inequality holds with equality.
    \begin{proposition} \label{corcomplex}
      For the $3$-metric
    \begin{equation*} \label{Cprodn=3}
      d_V(z_1,z_2,z_3) = |z_1-z_2||z_2-z_3||z_1 -z_3|, \quad z_1,z_2,z_3\in \C
    \end{equation*}
   the equality
    \begin{align} \label{Csimpequality}
      d_V(z_1,z_2,z_3) = d_V(y,z_2,z_3)+d_V(z_1,y,z_3)+d_V(z_1,z_2,y)
    \end{align}
    holds with pairwise different numbers $y, z_1,z_2,z_3 \in \C$ iff the 
    quadruple $(y,z_1,z_2,z_3)$ belongs (up to a shift and a multiplication
    by a complex number and up to a permutation of $z_1,z_2,z_3$) to the following two parameter family $(q,s>0)$:
    \begin{equation} \label{Ceqfamily}
     y=0, \quad z_1=1, \quad z_2=\frac{1}{s}\Big(-1 + i \sqrt{q(1+s)}\Big),
      \quad z_3=\frac{1}{s}\Big(-1 - i \sqrt{q^{-1}(1+s)}\Big).
    \end{equation}
    Further, the equality \eqref{Csimpequality} holds with $|z_1-y|=|z_2-y|=
    |z_3-y|$  if and only if the triangle $z_1,z_2,z_3$ is equilateral. 
    \end{proposition}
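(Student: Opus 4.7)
The plan begins by extracting the key identity from the proof of Theorem~\ref{Cprop}: by Cramer's rule, each summand in \eqref{Csimpequality} equals $a_i V(z_1,z_2,z_3)$ where $a_i = L_i(y) := \prod_{j\neq i}(y-z_j)/(z_i-z_j)$ are the Lagrange coefficients, and these automatically satisfy $\sum_i a_i = 1$. Because the four points are pairwise distinct, $V(z_1,z_2,z_3)\neq 0$, and the equality \eqref{Csimpequality} is exactly equality in the complex triangle inequality $|V\sum_i a_i|\le\sum_i|V a_i|$; it holds if and only if $a_1,a_2,a_3$ are all positive reals.

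For Part~I I invoke the shift and multiplication invariances of $d_V$ (the remark after Theorem~\ref{Cprop}) to normalize $y=0$ and $z_1=1$. The Lagrange identities $\sum_i a_i z_i^k = 0$ for $k=0,1,2$ become a linear system coupling $(a_1,a_2,a_3)$ and $(z_2,z_3)$. Fixing any positive triple $(a_1,a_2,a_3)$ with $\sum_i a_i=1$, I would solve the remaining two equations for $z_2,z_3$: subtracting yields $a_2 z_2(z_2-1) + a_3 z_3(z_3-1)=0$, which combined with the linear relation produces a real-coefficient quadratic in $z_2$ with strictly negative discriminant and conjugate roots
\begin{equation*}
z_2 = \frac{-a_1+i\sqrt{a_1 a_3/a_2}}{a_2+a_3},
\qquad
z_3 = \frac{-a_1-i\sqrt{a_1 a_2/a_3}}{a_2+a_3},
\end{equation*}
where interchange of $\pm$ corresponds to swapping $z_2\leftrightarrow z_3$ together with $a_2\leftrightarrow a_3$. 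The substitution $s = (a_2+a_3)/a_1$, $q = a_3/a_2$ is a bijection from the open positive simplex to $\R_+^2$ and rewrites the pair $(z_2,z_3)$ in precisely the form \eqref{Ceqfamily}.

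For the equilateral assertion, shifting $y=0$ turns the hypothesis $|z_i-y|=R$ into $|z_i|=R$, so each $d_V(z_1,\ldots,0,\ldots,z_3)=R^2|z_j-z_k|$ and \eqref{Csimpequality} collapses to $abc=R^2(a+b+c)$ for the side lengths $a,b,c$ of $\triangle z_1z_2z_3$. Substituting the classical formulas $abc=4RK$ (area $K$) and $K=r(a+b+c)/2$ (inradius $r$) rewrites this as $R=2r$, i.e.\ saturation of Euler's inequality $R\ge 2r$, which occurs precisely when the triangle is equilateral. The converse is immediate by taking $y$ to be the circumcenter.

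The main technical obstacle lies in Part~I: confirming that the two algebraic branches of the quadratic encode only the permutation symmetry (not an additional family of solutions) and that the reparametrization $(a_1,a_2,a_3)\mapsto(s,q)$ genuinely covers the full equality locus. Both are handled by direct substitution back into the Lagrange expressions, using the positivity of the $a_i$ to fix the branch of the square root.
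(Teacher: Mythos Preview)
Your argument for Part~I is essentially the paper's: both reduce equality in \eqref{Csimpequality} to the condition that the Cramer/Lagrange coefficients are positive reals, normalize $y=0$, $z_1=1$, and solve the resulting quadratic. The paper works with the ratios $c_i=a_i/a_1$ while you keep the $a_i$ themselves, but the substitution $s=(a_2+a_3)/a_1$, $q=a_3/a_2$ lands on exactly the same family \eqref{Ceqfamily}. One slip: you write $\sum_i a_i z_i^k=0$ for $k=0,1,2$, but the $k=0$ identity is $\sum_i a_i=1$ (as you correctly use two lines later); the remaining two equations $\sum_i a_i z_i=\sum_i a_i z_i^2=0$ are the ones you actually solve.

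Where you genuinely diverge from the paper is the equilateral assertion. The paper stays inside the parametrization: it checks that $|z_2|=|z_3|=1$ forces $s=2$, $q=1$ in \eqref{Ceqfamily}, which are the cube roots of unity. You instead shift $y$ to the origin, observe that \eqref{Csimpequality} becomes $abc=R^2(a+b+c)$, and then invoke $abc=4RK$ and $K=r(a+b+c)/2$ to recognize this as the equality case $R=2r$ of Euler's inequality. This is a clean alternative and is in fact the connection the paper mentions in the introduction (the formula $2rR=abc/(a+b+c)$); it trades an explicit computation for an appeal to a classical result. Either route is short, but yours makes the link to Corollary~\ref{s2:cor1}(i) explicit rather than re-deriving it from the parametric family.
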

    \begin{proof}
     From \eqref{Vest} we find that equality 
    holds in \eqref{tri} if and only if
    \begin{equation} \label{CeqV}
      \big| \sum_{i=1}^n V(z_1,\ldots,z_{i-1},y,z_{i+1},\ldots,z_n) \big|
      = \sum_{i=1}^n |V(z_1,\ldots,z_{i-1},y,z_{i+1},\ldots,z_n)|.
    \end{equation}
    Since the numbers are distinct we may shift $y$ to zero and multiply
    by a complex number such that $z_1=1$.
   Recall that the equality $|z_1|+|z_2|=|z_1+z_2|$ holds for $z_1,z_2\in \C$
    if and only if either $z_1=cz_2$ or $z_2=cz_1$ for some $c \ge 0$.
    More generally, the equality $|\sum_{i=1}^n z_i|=\sum_{i=1}^n |z_i|$
    holds if and only if there exists an index $j \in \{1,\ldots,n\}$
    and real numbers $c_i\ge 0$ such that $z_i = c_i z_j$ for $i=1,\ldots,n$.
    Moreover, if $z_i \neq 0$ for all $i=1,\ldots,n$ then the latter
    property holds for any $j\in\{1,\ldots,n\}$ with numbers $c_i>0$.
    We apply this to \eqref{CeqV} with $n=3$ after normalizing $y=0$
    and $z_1=1$. Thus the equality \eqref{Csimpequality} holds if and only
    if there are numbers $c_2,c_3>0$ such that
    \begin{equation*} \label{Csyst1}
      \begin{aligned}
        V(1,0,z_3)& = (-z_3)(1-z_3) = c_2 V(0,z_2,z_3)=c_2(-z_2)(-z_3)(z_2-z_3),\\     V(1,z_2,0)& = (1-z_2)z_2= c_3V(0,z_2,z_3)= c_3 (-z_2)(-z_3)(z_2-z_3).
      \end{aligned}
    \end{equation*}
    Since $z_2,z_3 \neq 0$ this is equivalent to the system
    \begin{equation*} \label{Csyst2}
      \begin{aligned}
        1-z_3& = - c_2z_2(z_2-z_3), \\
        1-z_2& = c_3 z_3(z_2-z_3).
      \end{aligned}
    \end{equation*}
    Subtract the second from the first equation and use $z_2 \neq z_3$
   to find the equivalent system
    \begin{equation} \label{Csyst3}
      \begin{aligned}
        1-z_3& = - c_2z_2(z_2-z_3), \\
        1& = -c_2z_2 -c_3 z_3. 
      \end{aligned}
    \end{equation}
    With the last equation we eliminate $z_3=-c_3^{-1}(1+c_2z_2)$ from
    the first  to obtain the quadratic equation
    \begin{align*}
      c_2(c_2+c_3)z_2^2+ 2 c_2 z_2 +c_3 + 1 =0.
    \end{align*}
    The solutions to \eqref{Csyst3} are then given by
    \begin{equation*}
      \begin{aligned}
        z_2^{\pm}&=\frac{1}{c_2+c_3}\Big(-1 \pm i \big( \frac{c_3}{c_2}(1+c_2+c_3)\big)^{1/2}\Big),\\
        z_3^{\mp}&
        =\frac{1}{c_2+c_3}\Big(-1 \mp i \big( \frac{c_2}{c_3}(1+c_2+c_3)\big)^{1/2}\Big).
      \end{aligned}
    \end{equation*}
    Introducing the parameters $s=c_2+c_3 >0$ and $q=\frac{c_3}{c_2}>0$
    we may write this as
    \begin{equation*}
      z_2^{\pm}(q,s)=\frac{1}{s}\Big(-1 \pm i \sqrt{ q(1+s)}\Big),
      \quad
        z_3^{\mp}(q,s)
        =\frac{1}{s}\Big(-1 \mp i \sqrt{ q^{-1}(1+s)}\Big).
    \end{equation*}
    Finally, we observe the relation
    \begin{align*}
      (z_2^{-},z_3^+)(q^{-1},s)=(z_3^-,z_2^+)(q,s), \quad q,s>0,
    \end{align*}
    where the latter is a permutation of $(z_2^+,z_3^-)(q,s)$.
    Hence the family \eqref{Ceqfamily} covers all solutions up to permutation.\\
    For the normalized coordinates $y=0, z_1=1$ the condition $1=|z_2|=|z_3|$
    holds if and only if $s=2,q=1$. For these values the numbers
    $z_2=\frac{1}{2}(-1 + i \sqrt{3})$, $z_3=\frac{1}{2}(-1 - i \sqrt{3})$
      are the third roots of unity which belong to the equilateral case.
      \end{proof}

    An easy consequence of Theorem \ref{Cprop} and Proposition \ref{corcomplex} are some elementary geometric inequalities for cyclic polygons 
   (see Figure  \ref{simplicial4}).
    \begin{corollary} \label{s2:cor1} In the following let $R$ be the circumradius of
   a    cyclic $n$-gon.
      \begin{itemize}
    \item[(i)] A triangle with sides $a$, $b$, $c$ and circumradius $R$ satisfies
            \begin{equation} \label{s2:si3}
        abc \le R^2(a+b + c).
        \end{equation}
            Equality holds if and only if the triangle is equilateral.
            \item[(ii)] The sides $a$, $b$, $c$, $d$ and diagonals $e$, $f$
            of a cyclic quadrangle satisfy
            \begin{equation} \label{s2:si4}
              abcdef \le R^3(abe + bcf + cde + adf).
            \end{equation}
            Equality holds if and only if the quadrangle is equilateral.
          \item[(iii)] The vertices $z_j$, $j=1,\ldots,n$ of a cyclic $n$-gon ($n\ge 3$)
            with circumradius $R$ satisfy
            \begin{equation} \label{s2:sin}
              \prod_{1 \le j < i \le n}|z_i-z_j| \le R^{\frac{(n+1)(n-2)}{2}} (n-2)! \sum_{1\le j < i \le n} |z_i-z_j|.
              \end{equation}
          \item[(iv)] For the cyclic  $n$-gon equality holds in  the simplex
            inequality for the Vandermonde $n$-metric $d_V$ from \eqref{Cprodn} if and only if the $n$-gon is equilateral.
      \end{itemize}
    \end{corollary}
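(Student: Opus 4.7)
The plan is to apply Theorem~\ref{Cprop} with $y$ chosen as the center of the circumcircle of the $n$-gon. By the translation invariance of $d_V$, we may assume the center is $0$, so $|z_k|=R$ for each vertex $z_k$, and the $k$-th term on the right-hand side of the simplex inequality factors as
$d_V(z_1,\ldots,0,\ldots,z_n) = \prod_{j\neq k}|z_j|\cdot\prod_{i<j,\,i,j\neq k}|z_i-z_j| = R^{n-1} P_{n-1}^{(k)}$, where $P_{n-1}^{(k)}$ denotes the Vandermonde product of the vertices with $z_k$ removed. For $n=3$ this immediately gives $abc\le R^2(a+b+c)$, proving (i); for $n=4$, labelling the sides $a,b,c,d$ and the diagonals $e,f$, the four sub-products $P_3^{(k)}$ line up as $bcf,\,cde,\,adf,\,abe$, which yields \eqref{s2:si4}.

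For (iii) I would proceed by induction on $n\ge 3$ from the base (i). Applying the simplex inequality at the center reduces $P_n$ to $R^{n-1}\sum_k P_{n-1}^{(k)}$; the inductive hypothesis then bounds each $P_{n-1}^{(k)}$ by $R^{n(n-3)/2}(n-3)!\sum_{i<j,\,i,j\neq k}|z_i-z_j|$ (using that the sub-polygon's circumradius is at most $R$ and the exponent is nonnegative). Summing over $k$ and noting that each pair $(i,j)$ is omitted by exactly $n-2$ indices, the double sum collapses to $(n-2)\sum_{i<j}|z_i-z_j|$, while the exponents combine via $(n-1)+n(n-3)/2=(n-2)(n+1)/2$ to deliver \eqref{s2:sin}.

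The hard part is (iv). Equality in the simplex inequality is equivalent---via Cramer's rule \eqref{s2:Cramer}---to all coefficients $a_k=V_k/V(z_1,\ldots,z_n)$ sharing a common complex argument; together with the top row of \eqref{s1:linsyst} (which at $y=0$ reads $\sum_k a_k=1$), this forces $a_k>0$ for every $k$. The remaining rows then give $\sum_k a_k z_k^j=0$ for $j=1,\ldots,n-1$, and since $a_k\in\R$ and $\bar z_k=R^2/z_k$, complex conjugation yields $\sum_k a_k z_k^{-j}=0$ for the same range. The crux is to test the measure $\mu=\sum_k a_k\delta_{z_k}$ against the Laurent polynomials $L_s(z)=z^{-s}\prod_k(z-z_k)$ for $s=1,\ldots,n-1$: each $L_s$ has exponents in $[-(n-1),n-1]$ and vanishes at every $z_k$, so the pairing $\sum_k a_k L_s(z_k)=0$ extracts only the constant term of $L_s$, namely $(-1)^{n-s}e_{n-s}(z_1,\ldots,z_n)$. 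Hence $e_1=\cdots=e_{n-1}=0$, so $\prod_k(z-z_k)=z^n+(-1)^n e_n$ and the $z_k$ are the $n$ distinct $n$-th roots of a single scalar; combined with $|z_k|=R$ this forces that scalar to have modulus $R^n$, so the $z_k$ form a regular, hence equilateral, $n$-gon. The converse is immediate by the $\Z/n$-symmetry, which gives $a_k=1/n>0$, and the equality claims in (i) and (ii) follow as the special cases $n=3,4$.
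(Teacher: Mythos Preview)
Your arguments for (i)--(iii) are correct and essentially identical to the paper's (one minor quibble: in the induction step the remaining $n-1$ vertices still lie on the original circle, so the sub-polygon's circumradius is exactly $R$, not merely $\le R$; your hedge is harmless but unnecessary).

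For (iv) your proof is correct but follows a genuinely different route from the paper. Both arguments start from the same observation that equality forces all Cramer coefficients $a_k$ to be real and positive, so the measure $\mu=\sum_k a_k\delta_{z_k}$ annihilates the monomials $z,\ldots,z^{n-1}$. From here the paper proceeds via Lagrange interpolation: it identifies $a_k=L_k(0)$, interpolates $f(z)=z^n$, and uses the error formula $z^n-p_f(z)=\prod_k(z-z_k)$ at $z=0$ to obtain $\sum_k a_k z_k^{\,n}=(-1)^{n+1}\prod_k z_k$; comparing absolute values then forces all $z_k^{\,n}$ to be equal. Your approach instead exploits the reality of the $a_k$ together with $\bar z_k=R^2/z_k$ to extend the moment conditions to negative exponents, and then tests $\mu$ against the Laurent polynomials $z^{-s}\prod_k(z-z_k)$ to kill the intermediate elementary symmetric functions $e_1,\ldots,e_{n-1}$ directly. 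Your method is arguably more transparent about \emph{why} the $z_k$ end up as $n$-th roots of a single scalar (it produces the factored polynomial $\prod_k(z-z_k)=z^n+(-1)^n e_n$ explicitly), while the paper's interpolation argument is perhaps more self-contained in that it does not need the negative moments. Both are short and valid.
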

    \begin{proof}
      Inequalities \eqref{s2:si3} and \eqref{s2:si4}  follow from the simplex inequality for $d_V$ with $n=3,4$ by taking $y$ as  the circumcenter of the circumcircle. The characterization of  equality follows for $n=3$ from Proposition \ref{corcomplex}
      and will be proved for general $n$ in assertion (iv).
    The proof of (iii) proceeds by induction. In case $n=3$ inequality \eqref{s2:sin} follows from \eqref{s2:si3}.
    For the  induction step from $n-1\ge 2$ to $n$ we use the simplex inequality with $y$ as circumcenter
    \begin{align*}
      \prod_{1 \le j < i \le n}|z_i-z_j| & \le \sum_{k=1}^n \prod_{1\le j \le n, j\neq k}|z_j-y| \prod_{1\le j <i \le n, i,j \neq k} |z_i -z_j|\\
      & \le R^{n-1} (n-3)!  R^{\frac{n(n-3)}{2}}\sum_{1 \le j < i \le n} \sum_{k\in \{1,\ldots,n\}\setminus \{i,j\}} |z_i -z_j| \\
         & = (n-2)! R^{\frac{(n+1)(n-2)}{2}}\sum_{1 \le j < i \le n}|z_i-z_j|.
    \end{align*}
    For the  last step note that there are $n-2$ numbers
    $k \in \{1,\ldots,n\}$ satisfying $k \neq i,j$ for a given a pair $1\le j < i \le n$.

    To prove assertion (iv)  we first show that the solution of \eqref{s1:linsyst} is $a_j=\frac{1}{n}$,
    $j=1,\ldots,n$ if $y=0$ and
    $z_j =z \omega_n^{j-1}$, $\omega_n= \exp(\frac{2 \pi i}{n})$, $|z|=1$. In fact, we have $\sum_{j=1}^n a_j =1$ and for $k=2,\ldots,n$
    \begin{equation*}
       \sum_{j=1}^n z_j^{k-1}=z^{k-1} \sum_{j=1}^n (\omega_n^{k-1})^{j-1}= z^{k-1} \frac{\omega_n^{(k-1)n}-1}{\omega_n^{k-1}-1}=0.
    \end{equation*}
    The solution formula \eqref{s2:Cramer} then shows that we have equality in \eqref{Vest}.
    
    Conversely, assume that equality holds in \eqref{Vest} for pairwise different points $|z_k|=R$, $k=1,\ldots,n$ and $y=0$. By scaling
    with $R^{n(n-1)/2}$ we can assume $R=1$ without loss of generality. Moreover, by the symmetry of $d_V$ we can order the
    points by increasing arguments, i.e. $z_k= e^{i \varphi_k}$, $k=1,\ldots,n$  where $0 \le \varphi_1< \cdots < \varphi_n < 2 \pi$.
    
    As noted in the proof of Proposition \ref{corcomplex}, the  equality in \eqref{Vest} implies $a_k>0$, $k=1,\ldots,n$ for the solution of \eqref{s1:linsyst}
    with $y=0$ on the right-hand side.
       Next we consider Lagrange interpolation of some function $f:\C \to \C$
    \begin{equation} \label{s2:Lagrange}
      p_f(z) = \sum_{k=1}^n f(z_k) L_k(z), \quad L_k(z) = \prod_{\ell=1, \ell \neq k}^n \frac{z_{\ell}-z}{z_{\ell}-z_k}.
    \end{equation}
    The $k$-th Lagrange polynomial is of the form $L_k(z) = \sum_{j=1}^n \alpha_j z^{j-1}$ where $\alpha \in \C^n$ solves
the linear system
    \begin{equation*}
      \alpha^{\top} \begin{pmatrix} 1 & \cdots & 1 \\
        \vdots & \cdots & \vdots \\
        z_1^{n-1} & \cdots & z_n^{n-1} \end{pmatrix} = (e^k)^{\top}=
        \begin{pmatrix} 0 & \cdots& 0 &1 & 0 & \cdots 0
      \end{pmatrix}. 
    \end{equation*}
    Multiplying from the right by $a\in \R^n$ we obtain $a_k=(e^k)^{\top}a= \alpha^{\top} e^1= \alpha_1=L_k(0)$.
    Now we interpolate $f(z)=z^n$ and use the well-known error representation 
    \begin{equation*}
      f(z) - p_f(z) = \prod_{k=1}^n (z-z_k), \quad z \in \C.
    \end{equation*}
    Recall that both sides are polynomials which  vanish at $z_j$, $j=1,\ldots,n$ and have leading term $z^n$,
    hence coincide.
    Then we find
    \begin{equation*}
      p_f(0)=\sum_{k=1}^n z_k^n L_k(0)= \sum_{k=1}^n a_k z_k^n = f(0)- \prod_{k=1}^n(-z_k)=(-1)^{n+1} \prod_{k=1}^n z_k.
    \end{equation*}
    Using $a_k>0$ and $|z_k|=1$ we obtain the equality 
    \begin{equation*}
      \sum_{k=1}^n |a_k z_k^n| = \sum_{k=1}^n a_k =1 = \Big| (-1)^{n+1}\prod_{k=1}^n z_k \Big|=
      \Big| \sum_{k=1}^n a_k z_k^n\Big|.
    \end{equation*}
    As in Proposition \ref{corcomplex} we conclude $z_k^n=c_k z_1^n$ for some $c_k>0$, $k=1,\ldots,n$.
    Since $|z_k^n|=1$, we even have $c_k=1$ for all $k=1,\dots,n$.
    Thus we obtain  $(z_1^{-1}z_k)^n=1$ for $k=1,\ldots,n$; i.e. the
    numbers $z_1^{-1}z_k$, $k=1,\ldots,n$ are $n$ different $n$-th roots of unity. By the ordering of their arguments
     we obtain  $z_1^{-1}z_k=e^{i(\varphi_k-\varphi_1)}= \omega_n^{k-1}$ for $k=1,\ldots,n$ . 
    Hence, the numbers $z_k=z_1 \omega_n^{k-1}$, $k=1,\ldots,n$ are the vertices of an  equilateral $n$-gon.
    \end{proof}
    

Another implication of Theorem \ref{Cprop} in case $n=3$ is the following:

\begin{corollary} \label{cor2:3-metric}
  Let $(X,\langle \cdot,\cdot \rangle,\|\cdot\|)$ be a Euclidean vector space. Then
  \begin{equation*}
    d_3(x_1,x_2,x_3) = \|x_1-x_2\| \|x_1-x_3\| \|x_2 - x_3\|, \quad x_1,x_2,x_3 \in X
  \end{equation*}
  defines a $3$-metric on $X$.
  \end{corollary}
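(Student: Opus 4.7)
My plan is to deduce the corollary from the complex-plane version of the Vandermonde $3$-metric (Theorem \ref{Cprop}) via an orthogonal projection argument. The axioms ($M_1$), ($M_2$), and definiteness are immediate from the product form of $d_3$: the value is nonnegative, invariant under permutation of the arguments, and vanishes if and only if some pair $x_i,x_j$ coincides. The entire difficulty therefore lies in the simplex inequality ($M_3$).

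First I would fix $x_1,x_2,x_3,y\in X$ and dispose of the case where two of the $x_i$ coincide, in which the left-hand side of \eqref{tri} already vanishes. Otherwise the three distinct points span an affine subspace of $X$ of dimension one or two, and I can enlarge this span (or embed $X$ isometrically into a two-dimensional Euclidean space if $\dim X<2$) to a two-dimensional affine subspace $P$ that contains all three points. Fixing an affine isometry $P\to\C$, I denote by $z_i$ the image of $x_i$.

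The heart of the argument is to project $y$ orthogonally onto $P$, obtaining a point $y'\in P$ that corresponds to some $w\in\C$. The Pythagorean theorem gives the contraction
\begin{equation*}
  |w-z_i|=\|y'-x_i\|\le\|y-x_i\|,\qquad i=1,2,3.
\end{equation*}
Theorem \ref{Cprop} applied with $n=3$ to the quadruple $(z_1,z_2,z_3,w)$ in $\C$ then yields the planar simplex inequality
\begin{equation*}
  d_3(x_1,x_2,x_3)\le d_3(y',x_2,x_3)+d_3(x_1,y',x_3)+d_3(x_1,x_2,y').
\end{equation*}
Every factor in each summand on the right is either a distance $\|x_i-x_j\|$ (untouched by the projection) or has the form $\|y'-x_i\|\le\|y-x_i\|$. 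Replacing $y'$ by $y$ throughout the right-hand side therefore only increases it, which yields the desired simplex inequality in $X$.

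The conceptual point I rely on is that $d_V$ depends only on the pairwise distances between its arguments: the full four-point configuration need not be realised isometrically in $\C$; it is enough to flatten the three-point base $\{x_1,x_2,x_3\}$ and to replace $y$ by its planar shadow, which is admissible because that shadow sits no further from each $x_i$ than $y$ itself. I foresee no substantive obstacle beyond getting this reduction right; everything downstream is mechanical.
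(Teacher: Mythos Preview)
Your proposal is correct and follows essentially the same route as the paper: reduce to the at-most-two-dimensional affine span of $x_1,x_2,x_3$, identify it with (a subset of) $\C$, orthogonally project $y$ onto that plane, invoke Theorem~\ref{Cprop} for $n=3$, and then use Pythagoras to replace the projected point by $y$. The only cosmetic difference is that the paper first shifts $x_1$ to the origin and works with the linear span $H=\Span(x_2,x_3)$ of dimension $h\le 2$, applying the simplex inequality in $\R^h$ directly rather than always enlarging to a two-dimensional plane; this sidesteps your case split on $\dim X$.
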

\begin{proof}
  The symmetry and definiteness are obvious from the definition. We prove the simplex inequality.
  By the shift invariance of $d_3$ we can assume $x_1=0$.
  The points $x_1,x_2,x_3$ then lie in the subspace $H=\mathrm{span}(x_2,x_3)$ where
  $\dim(H)=h\le 2$. Choose an orthonormal basis in $H$, i.e. a
  map $Q \in L(\R^h,X)$ which is unitary (i.e. $Q^{\star}Q=I_h$)  and satisfies $H =\mathrm{range}(Q)$.
  Then we have $x_j=Qz_j$ $j=1,2,3$ for $z_1=0$ and  some $z_2,z_3 \in \R^h$.
  Let $y\in X$ be arbitrary and let $Py= Qz$ with $z= Q^{\star}y$ be its orthogonal projection onto
  $H$.  From  the simplex inequality in $\R^h$ we obtain
  \begin{align*}
    &\|x_2\| \|x_3\| \|x_3-x_2\| = \|z_2\| \|z_3\| \|z_3-z_2\|\\
    & \le  \|z_2-z\| \|z_3-z\| \|z_3-z_2\|+  \|z\| \|z_3\| \|z_3-z\| 
     + \|z_2\| \|z\| \|z-z_2\|.
  \end{align*}
    Since by Pythagoras
  \begin{align*}
    \|z_j-z\|^2&=\|Qz_j-Qz\|^2 = \|x_j - Py\|^2 \le \|x_j-Py\|^2+ \|Py -y\|^2= \|x_j-y\|^2
    \end{align*}
  holds for $j=1,2,3$, we obtain the conclusion
   \begin{align*}
  d_3(0,x_2,x_3)&=  \|x_2\| \|x_3\| \|x_3-x_2\| \\ 
    &\le  \|x_2-y\| \|x_3-y\| \|x_3-x_2\| +  \|y\| \|x_3\| \|x_3-y\| 
     + \|x_2\| \|y\| \|y-x_2\| \\
    &= d_3(y,x_2,x_3)+d_3(0,y,x_3) +d_3(0,x_2,y).
   \end{align*}
   \flushright \end{proof}

 It is tempting to conjecture that 
 \begin{equation} \label{gendef}
  d(x_1,\ldots,x_n) = \prod_{1 \le j < i \le n} \|x_i-x_j\|,
  \quad x_i\in X, i=1,\ldots, n,
  \end{equation}
 generates an $n$-metric on a Euclidean space.
 However, the expression \eqref{gendef} does not satisfy the simplex inequality
 for $n=4$ 
    in dimensions $\ge 3$ as the following example shows.
 \begin{example} \label{s2:ex1}
   Consider $X=\R^3$, $y=0$, and let $x_1,\ldots,x_4$ be the vertices
    of an equilateral tetrahedron:
    \begin{align*}
      x_1&=(1,0,0), \quad x_2 = \frac{1}{3}(-1,2\sqrt{2},0), \\
      x_3& =\frac{1}{3}(-1,-\sqrt{2},\sqrt{6}), \quad
      x_4= \frac{1}{3}(-1,-\sqrt{2}, -\sqrt{6}).
    \end{align*}
    Then one verifies
    \begin{align*}
      \|x_j\|&=1 , \quad j=1,\ldots,4, \\
      \|x_i -x_j\|& =  \sqrt{\frac{8}{3}}, \quad 1\le i < j \le 4.
    \end{align*}
    The simplex inequality for \eqref{gendef} requires
    \begin{align*}
      \big( \frac{8}{3}\big)^3 \le 4 \big( \frac{8}{3}\big)^{3/2}
      \quad \text{or} \quad 2^5 \le 3^3,
    \end{align*}
    which is wrong. However, note that this counterexample does not work for the modified expression
    $d_{\star}(x_1,\ldots,x_4)= d(x_1,\ldots,x_4)^{1/6}$ suggested by \eqref{eq2:rootmetric}. 
 \end{example}

  In  Section \ref{sec3} we present an alternative generalization of the
  Vandermonde pseudo $n$-metric which works in arbitrary dimensions but
  is not necessarily definite.

  The following proposition collects basic properties of pseudo $n$-metrics
  which allow us to transfer the Vandermonde metric $d_V$ to some function spaces.

\begin{proposition} \label{prop1.1}
  For a pseudo $n$-metric $d_X$ on a set $X$ the following holds:
  \begin{itemize}
  \item[(i)] $d_X$ defines a pseudo $n$-metric on any subset $Y\subset X$.
  \item [(ii)] If $d_Y$ is a pseudo $n$-metric an a set $Y$, then
    \begin{equation*} \label{eqprod}
      d_{X \times Y}\Big( \begin{pmatrix} x_1 \\ y_1 \end{pmatrix}, \ldots,
      \begin{pmatrix} x_n \\ y_n \end{pmatrix} \Big) :=
      \Big\| \begin{pmatrix} d_X(x_1,\ldots,x_n) \\ d_Y(y_1, \ldots,y_n) \end{pmatrix} \Big\|
    \end{equation*}
    defines a pseudo $n$-metric on $X \times Y$ for any monotone norm $\|\cdot\|$
    in $\R^2$.
    \item[(iii)] Let $\cF \subseteq X^Y$ be a subset of functions from some
set $Y$ to $X$. Further assume that a normed space
$Z \subseteq \R^{Y}$ is given with a monotone norm $\|\cdot \|_Z$ and
the following property:
\begin{align} \label{Fprop}
  f_1,\ldots,f_n \in \cF \Longrightarrow d(f_1(\cdot),\ldots,f_n(\cdot))
  \in Z.
\end{align}
Then a pseudo $n$-metric on $\cF$ is defined by
\begin{equation*} \label{pseudoF}
  d_{\cF}(f_1,\ldots,f_n)  = \| d(f_1(\cdot),\ldots,f_n(\cdot)) \|_Z.
\end{equation*}
 \end{itemize}
\end{proposition}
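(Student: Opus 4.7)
The plan is to verify the three metric axioms $(M_1)$--$(M_3)$ for each of the three constructions, using the corresponding axioms for $d_X$ (and $d_Y$) as the starting point. Parts (i) and (ii) are essentially bookkeeping, while (iii) is the one that requires a bit of care because the pointwise values live in a function space.

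For part (i), I would simply note that semidefiniteness, symmetry, and the simplex inequality are universally quantified conditions on tuples; restricting the set over which these tuples are drawn from $X$ to $Y \subseteq X$ cannot destroy them. So the restriction of $d_X$ to $Y^n$ is automatically a pseudo $n$-metric on $Y$.

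For part (ii), the semidefiniteness and symmetry of $d_{X \times Y}$ follow componentwise: if $(x_i, y_i) = (x_j, y_j)$ for some $i \neq j$, then both $d_X(x_1,\ldots,x_n)$ and $d_Y(y_1,\ldots,y_n)$ vanish, so the norm of the zero vector in $\R^2$ is zero; and permutation invariance passes through the norm. The crucial step is the simplex inequality. From $(M_3)$ applied separately to $d_X$ and $d_Y$ one has the componentwise bound
\begin{equation*}
\begin{pmatrix} d_X(x_1,\ldots,x_n) \\ d_Y(y_1,\ldots,y_n) \end{pmatrix}
\;\le\; \sum_{i=1}^{n} \begin{pmatrix} d_X(x_1,\ldots,x',\ldots,x_n) \\ d_Y(y_1,\ldots,y',\ldots,y_n) \end{pmatrix}
\end{equation*}
(with $x',y'$ in the $i$-th slot). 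Since all entries are nonnegative and $\|\cdot\|$ is monotone on $\R^2$, applying the norm preserves the inequality; the triangle inequality of $\|\cdot\|$ then distributes the norm through the sum and yields $(M_3)$ for $d_{X \times Y}$.

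Part (iii) follows the same template, but now the intermediate object $d(f_1(\cdot),\ldots,f_n(\cdot))$ is a function on $Y$ which, by assumption \eqref{Fprop}, lies in $Z$. Semidefiniteness and symmetry again transfer directly from the pointwise properties of $d$ via $\|\cdot\|_Z$. For the simplex inequality I would write, pointwise in $y \in Y$,
\begin{equation*}
d\bigl(f_1(y),\ldots,f_n(y)\bigr) \;\le\; \sum_{i=1}^{n} d\bigl(f_1(y),\ldots,g(y),\ldots,f_n(y)\bigr),
\end{equation*}
for any test function $g \in \cF$, then apply $\|\cdot\|_Z$. Monotonicity of $\|\cdot\|_Z$ upgrades the pointwise inequality between nonnegative elements of $Z$ to an inequality of norms, and the triangle inequality of $\|\cdot\|_Z$ then gives the sum of the $d_{\cF}$-values on the right.

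The only real subtlety is making sure monotonicity of the norm is invoked correctly; the pseudo-metric values are nonnegative, so $0 \le a \le b$ (componentwise, or pointwise) implies $\|a\| \le \|b\|$, which is exactly what monotonicity of a norm means. No step requires definiteness, which is consistent with the statement producing only a pseudo $n$-metric.
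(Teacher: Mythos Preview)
Your proof is correct and follows essentially the same approach as the paper: parts (i) and (ii) are dismissed there as obvious (with a reference), and for part (iii) the paper does exactly what you do---write the pointwise simplex inequality, invoke monotonicity of $\|\cdot\|_Z$ to pass to norms, and then use the triangle inequality to split the sum. Your write-up simply supplies more of the routine detail for (i) and (ii) that the paper omits.
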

\begin{proof}  Assertions (i) and (ii) are rather obvious; cf. \cite[Proposition 2]{Be24}.
   Recall that a norm $\|\cdot\|_Z$
  on a linear space $Z$ of real-valued functions $f:Y \to \R$
  is called monotone (see e.g. \cite[Ch.5.4]{HJ2013}) iff for all $f_1,f_2 \in Z$
  \begin{align*}
    |f_1(y)| \le |f_2(y)| \; \forall y \in Y \Longrightarrow
    \|f_1\|_Z \le \|f_2\|_Z.
  \end{align*}
 For the proof of (iii) let $f_1,\ldots,f_n,g \in \cF$ be given and  observe the pointwise inequality
\begin{align*}
  d(f_1(y),\ldots,f_n(y)) & \le \sum_{i=1}^n d(f_1(y),\ldots,f_{i-1}(y),g(y),
  f_{i+1}(y),\ldots f_n(y)), \quad \forall y \in Y.
\end{align*}
Then  the monotonicity of the norm implies
\begin{align*}
  d_{\cF}(f_1,\ldots,f_n) & \le \sum_{i=1}^n \| d(f_1(\cdot),\ldots,f_{i-1}(\cdot),g(\cdot),f_{i+1}(\cdot),\ldots,f_n(\cdot)) \|_Z \\
  & = \sum_{i=1}^n d_{\cF}(f_1,\ldots,f_{i-1},g,f_{i+1},\ldots, f_n).
\end{align*}
\flushright \end{proof}

    Combining Proposition \ref{prop1.1} and Theorem \ref{Cprop} leads to
    the following examples of pseudo $n$-metrics.
    \begin{example} \label{ex1}
      Let $\|\cdot\|$ be any monotone norm in $\R^k$. Then
      \begin{align*}
        d_k(x_1,\ldots,x_n)= \left\| \begin{pmatrix} d_V(x_{1,1},\ldots,x_{n,1})\\
          \vdots \\ d_V(x_{1,k},\ldots,x_{n,k}) \end{pmatrix} \right\|,
        \quad x_j = (x_{j,i})_{i=1}^k \in \R^k
      \end{align*}
      defines a pseudo $n$-metric in $\R^k$. Note that $d_k$, $k \ge 2$ is
      not definite although $d_V$ is.
    \end{example}
    \begin{example} \label{ex2}
      Let $(\Omega,\cG,\mu)$ be  a bounded measure space. We apply Proposition
      \ref{prop1.1}
      to  $Z=L^p(\Omega,\cG;\R)$ with the monotone norm
      $\|\cdot\|_{L^p}$  and to $\cF = L^r(\Omega,\cG;\R)$ with  $2r \ge n(n-1)p$.
      Then the expression
\begin{align*}
  d_{L^r}(f_1,\ldots,f_n)& = \Big( \int_{\Omega} \prod_{1\le j < i \le n}|f_i- f_j|^p
  \, \mathrm{d}\mu \Big)^{1/p}.
\end{align*}
defines a pseudo $n$-metric on $L^r(\Omega,\cG;\R)$. Note that $2r \ge n(n-1)p$ guarantees
that the property \eqref{Fprop} holds.
      \end{example}

 \section{Generalized Vandermonde pseudo $n$-metrics}
 \label{sec3}
 In the following we generalize   the Vandermonde expression  \eqref{Cprodn}
to arbitrary dimensions by using  multilinear symmetric maps.
 \subsection{ A formula for symmetric multilinear maps}
 \label{sec3.1}
Our starting point is the  following purely algebraic result.
\begin{theorem} (The general Vandermonde equalities) \label{th:gV}\\
    Let $X,Y$ be vector spaces over $\K=\R,\C$ and let
    \begin{align*}
      A\colon X^{M_n}\longrightarrow Y, \quad M_n = \frac{1}{2}n(n-1), n \ge 2,
    \end{align*}
    be an $M_n$-linear and symmetric map. 
    \begin{itemize}
    \item[(i)] For all $x_1,\ldots,x_n \in X$ the following holds:
      \begin{equation} \label{eq:expandV}
        A\Big(\prod_{1\le j < i \le n}(x_i-x_j)\Big)=
        \sum_{\pi\in \mathcal{P}_n} \mathrm{sign}(\pi) A\Big( \prod_{j=1}^n
        x_j^{\pi(j)-1}\Big).
      \end{equation}
    \item[(ii)] The map $V: X^n \to Y$  defined by
      \begin{align*}
        V(x_1,\ldots,x_n) = A\Big(\prod_{1\le j < i \le n}(x_i-x_j)\Big),
      \end{align*}
     satisfies for all $y \in X$ and for all $x_1,\ldots,x_n \in X$ the equality:
      \begin{equation} \label{eq:sumV}
        V(x_1,\ldots,x_n) = \sum_{i=1}^n V(x_1,\ldots,x_{i-1},y,x_{i+1},\ldots,x_n).
      \end{equation}
    \end{itemize}
\end{theorem}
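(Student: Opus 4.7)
My plan is to reduce both identities to purely algebraic statements in a commutative polynomial ring $\K[t_1,\ldots,t_n]$ (or its extension by an auxiliary variable $s$), and then transfer them to $Y$ via a canonical linear map induced by the symmetry of $A$.

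For (i), the starting point is the classical Leibniz expansion of the Vandermonde determinant in commuting indeterminates,
\[
\prod_{1 \le j < i \le n}(t_i - t_j) \;=\; \sum_{\pi \in \mathcal{P}_n} \mathrm{sign}(\pi) \prod_{j=1}^n t_j^{\pi(j)-1},
\]
an identity in $\K[t_1,\ldots,t_n]$. I define a linear map $\tilde A$ from the degree-$M_n$ homogeneous part of this ring to $Y$ by sending each monomial $t_1^{m_1}\cdots t_n^{m_n}$ (with $\sum_j m_j = M_n$) to $A$ evaluated on the tuple in which $x_j$ appears $m_j$ times; the symmetry of $A$ makes this independent of the ordering, so $\tilde A$ is well defined. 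By multilinearity of $A$ the left-hand side of (i) is exactly $\tilde A$ applied to the product polynomial, and the right-hand side is $\tilde A$ applied to the Leibniz sum. Part (i) then follows by applying $\tilde A$ to both sides of the scalar identity.

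For (ii), I would use the auxiliary $(n+1)\times(n+1)$ matrix over $\K[s,t_1,\ldots,t_n]$,
\[
N \;=\; \begin{pmatrix} 1 & 1 & \cdots & 1 \\ 1 & 1 & \cdots & 1 \\ s & t_1 & \cdots & t_n \\ \vdots & \vdots & & \vdots \\ s^{n-1} & t_1^{n-1} & \cdots & t_n^{n-1} \end{pmatrix},
\]
whose first two rows coincide, so $\det N = 0$. Expanding along the first row yields $\prod_{j<i}(t_i-t_j)$ from the $s$-column (with cofactor sign $+1$) and, from the $t_k$-column, the scalar Vandermonde polynomial in $(s,t_1,\ldots,\hat t_k,\ldots,t_n)$ with cofactor sign $(-1)^k$. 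Transposing $s$ from slot $1$ into slot $k$ inside $V$ absorbs $k-1$ adjacent transpositions and a sign $(-1)^{k-1}$, so the two signs combine to $-1$ uniformly in $k$. The identity $\det N = 0$ therefore rearranges to the polynomial identity
\[
\prod_{1 \le j<i\le n}(t_i - t_j) \;=\; \sum_{k=1}^n \prod_{1\le j<i\le n}\bigl(t_i^{(k)} - t_j^{(k)}\bigr),
\]
where $t_\ell^{(k)} = s$ for $\ell = k$ and $t_\ell^{(k)} = t_\ell$ otherwise. Extending $\tilde A$ to $\K[s,t_1,\ldots,t_n]_{M_n}$ by treating $s^a$ as $a$ formal occurrences of $y$ (and using symmetry of $A$ once more to make the extension well defined), and applying this extension to both sides, delivers (ii).

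The main technical subtlety lies in the two bridges between polynomial algebra and multilinear algebra: verifying that $\tilde A$ is well defined on polynomials rather than on ordered tuples, which genuinely relies on the symmetry of $A$, and aligning the cofactor signs in $\det N$ with the alternation of the scalar Vandermonde polynomial so that no stray signs survive. Once these are in place, the transfer of the classical scalar identities to the abstract multilinear setting is automatic.
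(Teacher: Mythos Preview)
Your proof is correct and takes a genuinely different route from the paper. The paper argues both parts by hand: for (i) it runs an induction on $n$, expanding $\prod_{j=1}^n(x_{n+1}-x_j)$ by multilinearity and pairing off terms $(\pi,\sigma)$ with $(\tilde\pi,\tilde\sigma)$ that cancel via the symmetry of $A$; for (ii) it invokes the expansion (i), sorts the right-hand side of \eqref{eq:sumV} by the exponent $p$ of $y$, and shows that the $p\ge 2$ contributions cancel pairwise while the $p=1$ contribution reproduces $V(x_1,\ldots,x_n)$. You instead set up a single linear ``symbol'' map $\tilde A$ from the degree-$M_n$ homogeneous component of $\K[t_1,\ldots,t_n]$ (resp.\ $\K[s,t_1,\ldots,t_n]$) to $Y$, and then transport the classical scalar identities---the Leibniz expansion of the Vandermonde determinant for (i), and the vanishing of the $(n+1)\times(n+1)$ determinant with a repeated row for (ii)---through $\tilde A$. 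The crucial observation that $\tilde A$ is well defined on monomials (not on ordered tuples) is exactly where the symmetry of $A$ enters, and the compatibility $\tilde A\bigl(\prod_k \ell_k\bigr)=A(L_1,\ldots,L_{M_n})$ for linear forms $\ell_k\leftrightarrow L_k$ is exactly where multilinearity enters; once these bridges are in place the rest is classical commutative algebra. Your approach is shorter and more conceptual, making transparent that the abstract identities are formal shadows of the scalar ones; the paper's approach is self-contained and combinatorially explicit, which in turn makes the later extension in Proposition~\ref{prop7:extended} (an asymmetric variant with an extra $x_{n+1}^{q-1}$ slot) a direct modification of the same cancellation bookkeeping.
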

\begin{proof} First note that it is convenient to write the arguments of the symmetric map $A$ as products
  since the sequence of arguments does not matter. With this notation, a term of type $x_j^0$ vanishes. \\
    {\bf (i):} \\
  For $n=2$ the equation \eqref{eq:expandV} is obvious:
  \begin{align*}
    \mathrm{sign}(\mathrm{id}) A(x_2^{\pi(2)-1})+ \mathrm{sign}(1,2)A(x_1^{\pi(1)-1})
    = A x_2 - A x_1 = A(x_2-x_1).
  \end{align*}
  For the induction step from $n$ to $n+1$ we use the symmetry of $A$:
\begin{equation} \label{eq:bigsum}
  \begin{aligned}
    & A\Big(\prod_{1\le j < i \le n+1}(x_i-x_j)\Big) = A\Big(\prod_{1\le j < i \le n}(x_i-x_j),\prod_{j=1}^n(x_{n+1}-x_j)\Big)\\
    & = \sum_{\pi \in \mathcal{P}_n}\mathrm{sign}(\pi)A\Big(\prod_{j=1}^n x_j^{\pi(j)-1},\prod_{j=1}^n(x_{n+1}-x_j)\Big)\\
    & = \sum_{\pi \in \mathcal{P}_n}\mathrm{sign}(\pi) \sum_{\sigma \subseteq
      \{1,\ldots,n\}}(-1)^{|\sigma|}A\Big(\prod_{j=1}^n x_j^{\pi(j)-1},
    \prod_{j \in \sigma}x_j ,x_{n+1}^{n - |\sigma|} \Big).
  \end{aligned}
  \end{equation}
  We consider the summand for any two indices $\ell,k\in \{1,\ldots,n\}$ with
  $\pi(\ell)= \pi(k)+1$ (hence $\ell \neq k$)  and any
  $\sigma \subseteq\{1,\ldots,n\}$ with $k \in \sigma, \ell \notin \sigma$.
  Let $\tau_{k,\ell}$ be the transposition of $k$ and $\ell$ and set
  \begin{align*}
    \tilde{\pi} & = \pi \circ \tau_{k,l}, \quad \tilde{\sigma}=(\sigma \setminus
    \{k\})\cup \{\ell\}.
  \end{align*}
  Then we obtain $|\tilde{\sigma}|=|\sigma|$, $\mathrm{sign}(\tilde{\pi})=
  - \mathrm{sign}(\pi)$, $\tilde{\pi}(k)=\pi(\ell)$, $\tilde{\pi}(\ell)=\pi(k)$ and
 
  \begin{align*}
    & \mathrm{sign}(\tilde{\pi})(-1)^{|\tilde{\sigma}|}A\Big(\prod_{j=1}^n x_j^{\tilde{\pi}(j)-1},
    \prod_{j \in \tilde{\sigma}}x_j ,x_{n+1}^{n - |\tilde{\sigma}|} \Big)\\
    & = -\mathrm{sign}(\pi)(-1)^{|\sigma|} A\Big( \prod_{j=1,j\neq k,\ell}^n
    x_j^{\pi(j)-1}, x_k^{\pi(\ell)-1}, x_{\ell}^{\pi(k)-1}, x_{\ell},
    \prod_{j\in \sigma,j\neq k}x_j, x_{n+1}^{n - |\sigma|}\Big)\\
    &  = -\mathrm{sign}(\pi)(-1)^{|\sigma|} A\Big( \prod_{j=1,j\neq k,\ell}^n
    x_j^{\pi(j)-1}, x_k^{\pi(k)}, x_{\ell}^{\pi(\ell)-1},
    \prod_{j\in \sigma,j\neq k}x_j, x_{n+1}^{n - |\sigma|}\Big)\\
    & = -\mathrm{sign}(\pi)(-1)^{|\sigma|} A\Big( \prod_{j=1}^nx_j^{\pi(j)-1},
    \prod_{j\in \sigma}x_j, x_{n+1}^{n - |\sigma|}\Big).
      \end{align*}
  The last term belongs to $\pi$ and $\sigma$, so that these two terms
  cancel each other in \eqref{eq:bigsum}.
  It is left to consider $(\pi,\sigma)$-terms for which there
  is no index $j \in \{1,\ldots,n\}$ in the sequence
  $\pi^{-1}(n),\pi^{-1}(n-1),\ldots,\pi^{-1}(1)$ such that $\pi^{-1}(j) \notin \sigma$ and $\pi^{-1}(j-1) \in \sigma$. For a given $\pi\in \mathcal{P}(n)$, the only sets
  $\sigma$ satisfying this condition are
  \begin{align*}
    \sigma_p=\{\pi^{-1}(n),\ldots,\pi^{-1}(p)\} \quad \text{for some} \;\;
    1\le p \le n+1,
  \end{align*}
  where we set $\sigma_{n+1}=\emptyset$. Therefore, we obtain from \eqref{eq:bigsum}
  \begin{equation} \label{eq:smallsum}
    \begin{aligned}
      & A\Big(\prod_{1\le j < i \le n+1}(x_i-x_j)\Big)\\
      &= \sum_{\pi \in \mathcal{P}_n}
    \sum_{p=1}^{n+1} \mathrm{sign}(\pi) (-1)^{n+1-p}
     A \Big( \prod_{j=1}^nx_j^{\pi(j)-1},\prod_{j \in \sigma_p} x_{j}, x_{n+1}^{p-1}\Big).
    \end{aligned}
  \end{equation}
  For $p \in \{1,\ldots,n+1\}$ and $\pi \in \mathcal{P}_n$ define
  $\tilde{\pi} \in \mathcal{P}_{n+1}$ by
  \begin{align*}
    \tilde{\pi}(j) = \begin{cases} \pi(j), & \pi(j) \in \{1,\ldots,p-1\},
      j \le n, \\
      \pi(j)+1, & \pi(j) \in \{p,\ldots,n\}, j\le n, \\
      p, & j=n+1.
    \end{cases}
  \end{align*}
  Clearly, the map $\mathcal{P}_n\times \{1,\ldots,n+1\} \to \mathcal{P}_{n+1}$,
  $(\pi,p) \mapsto \tilde{\pi}$ is a bijection which satisfies
  $\mathrm{sign}(\tilde{\pi})= (-1)^{n+p-1}\mathrm{sign}(\pi)$. With this setting
  we may write \eqref{eq:smallsum} as
  \begin{align*}
    A\Big(\prod_{1\le j < i \le n+1}(x_i-x_j)\Big)=
    \sum_{\tilde{\pi}\in \mathcal{P}_{n+1}} \mathrm{sign}(\tilde{\pi})
    A\Big( \prod_{j=1}^{n+1} x_j^{\tilde{\pi}(j)-1}\Big),
  \end{align*}
  which is our assertion.

  {\bf (ii):} \\
  Consider the right-hand side of \eqref{eq:sumV},use formula \eqref{eq:expandV}, and sort by the powers of $y$ to obtain
  \begin{align*}
     \sum_{i=1}^n& V(x_1,\ldots,x_{i-1},y,x_{i+1},\ldots,x_n) 
    = \sum_{i=1}^n \sum_{\pi \in \mathcal{P}_n} \mathrm{sign}(\pi)
    A \Big( \prod_{j=1, j\neq i}^n x_j^{\pi(j)-1}, y^{\pi(i)-1} \Big)\\
    & \quad \quad \qquad = \sum_{p=1}^n \sum_{\pi \in \mathcal{P}_n} \mathrm{sign}(\pi)
    A \Big( \prod_{j=1, \pi(j)\neq p}^n x_j^{\pi(j)-1}, y^{p-1} \Big).
  \end{align*}
  We consider two cases:\\
  {\bf $p=1$: } \\
    In this case we have
    \begin{align*}
      \sum_{\pi \in \mathcal{P}_n} \mathrm{sign}(\pi)
      A \Big( \prod_{j=1, \pi(j)\neq 1}^n x_j^{\pi(j)-1}, y^{p-1} \Big)&=
       \sum_{\pi \in \mathcal{P}_n} \mathrm{sign}(\pi)
      A \Big( \prod_{j=1}^n x_j^{\pi(j)-1} \Big)\\
      &= V(x_1,\ldots,x_n).
    \end{align*}

    \noindent
    {\bf $2 \le p \le n$:}\\
    We show that the corresponding terms vanish.
    Let $\ell=\pi^{-1}(p)$, $k =\pi^{-1}(1)$ (note $k \neq \ell$) and 
     define $\tilde{\pi}= \pi \circ \tau_{kl}$ as above, so
    that
    \begin{align*}
      \mathrm{sign}(\tilde{\pi})= - \mathrm{sign}(\pi), \quad
        \tilde{\pi}(\ell)=1, \quad \tilde{\pi}(k) = p.
    \end{align*}
    The corresponding $\tilde{\pi}$-term then satisfies
    \begin{align*}
      \mathrm{sign}(\tilde{\pi}) A\Big( \prod_{j=1, \tilde{\pi}(j) \neq p}^n
      x_j^{\tilde{\pi}(j)-1},  y^{p-1} \Big)= -  \mathrm{sign}&(\pi) A\Big(\prod_{j=1,j \neq k,\ell}^n
      x_j^{\pi(j)-1}, x_{\ell}^{\tilde{\pi}(\ell)-1}, y^{p-1} \Big)\\
    = - \mathrm{sign}(\pi)A\Big(\prod_{j=1,j \neq \ell}^n
      x_j^{\pi(j)-1},x_k^{\pi(k)-1}, y^{p-1} \Big) & = - \mathrm{sign}(\pi)A\Big(\prod_{j=1,j \neq \ell}^n
      x_j^{\pi(j)-1}, y^{p-1} \Big).
    \end{align*}
    Hence the terms which belong to $\pi$ and $\tilde{\pi}$ cancel each other, and
    the summand corresponding to $p \in \{ 2,\ldots,n\}$ vanishes.
\end{proof}

\subsection{Vandermonde (pseudo) $n$-metrics}
\label{sec3.2}
An easy consequence of Theorem \ref{th:gV} is the following construction of
a pseudo $n$-metric.
\begin{corollary} (The generalized Vandermonde pseudo $n$-metric) \label{cor:Vnmet}\\
  Let $X$ be a vector space and let $(Y,\|\cdot\|)$ be a normed space. Any $M_n$-linear
  and symmetric map $A:X^{M_n}\to Y$ defines a pseudo $n$-metric on $X$ via
  \begin{equation} \label{eq:defVmet}
    d(x_1,\ldots,x_n)= \big\| A\big( \prod_{1\le j <i\le n}(x_i-x_j) \big) \big\|,
    \quad  x_1,\ldots,x_n \in X.
  \end{equation}
  \end{corollary}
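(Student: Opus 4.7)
The plan is to verify the three pseudo $n$-metric axioms ($M_1$)--($M_3$) for the map $d$ in \eqref{eq:defVmet}, drawing on Theorem \ref{th:gV}.

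For ($M_1$), if $x_i=x_j$ with $i\neq j$, then the factor $x_i-x_j$ (or $x_j-x_i$, whichever appears with the smaller index in the second slot) in the formal product is the zero vector. Since $A$ is multilinear and one of its $M_n$ arguments vanishes, $A(\prod_{1\le j<i\le n}(x_i-x_j))=0$, and taking the norm in $Y$ gives $d(x_1,\ldots,x_n)=0$.

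For ($M_2$), I would argue by tracking signs. Under a permutation $\pi\in\mathcal{P}_n$, the product $\prod_{1\le j<i\le n}(x_{\pi(i)}-x_{\pi(j)})$ is, as a collection of $M_n$ vectors in $X$, the same multiset of differences as the unpermuted product, except that each pair $(j,i)$ with $j<i$ and $\pi(j)>\pi(i)$ contributes a sign flip. The number of such pairs is the inversion count of $\pi$, so the total sign produced is $\mathrm{sign}(\pi)$. Pulling these signs out of $A$ by multilinearity, and reordering slots by symmetry, gives $A$ evaluated at the same argument but multiplied by $\mathrm{sign}(\pi)$; the norm then absorbs the sign, yielding $d(x_{\pi(1)},\ldots,x_{\pi(n)})=d(x_1,\ldots,x_n)$.

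For ($M_3$), Theorem \ref{th:gV}(ii) does essentially all the work: it provides the exact algebraic identity
\begin{equation*}
  A\Big(\prod_{1\le j<i\le n}(x_i-x_j)\Big) = \sum_{i=1}^n A\Big(\prod_{1\le k<\ell\le n}(x_\ell^{(i)}-x_k^{(i)})\Big),
\end{equation*}
where $x^{(i)}$ denotes the tuple with $x_i$ replaced by $y$. Applying the triangle inequality of the norm on $Y$ to the right-hand side immediately gives
\begin{equation*}
  d(x_1,\ldots,x_n)\le \sum_{i=1}^n d(x_1,\ldots,x_{i-1},y,x_{i+1},\ldots,x_n).
\end{equation*}

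The main obstacle has already been dispatched in Theorem \ref{th:gV}; once the Vandermonde identity \eqref{eq:sumV} is available, the corollary is only a matter of invoking multilinearity for ($M_1$), the sign-bookkeeping argument sketched above for ($M_2$), and the triangle inequality in $Y$ for ($M_3$). The only point deserving care in writing up is making the sign computation in ($M_2$) fully explicit, since it is the step that is easiest to mishandle.
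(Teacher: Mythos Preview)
Your proof is correct. For ($M_1$) and ($M_3$) you argue exactly as the paper does. For ($M_2$), however, the paper takes a different route: it invokes the expansion formula \eqref{eq:expandV} from Theorem \ref{th:gV}(i), writes $d(x_{\sigma(1)},\ldots,x_{\sigma(n)})$ as the norm of the signed sum $\sum_{\pi\in\mathcal{P}_n}\mathrm{sign}(\pi)A(\prod_j x_{\sigma(j)}^{\pi(j)-1})$, and then reindexes via $\tilde\pi=\pi\circ\sigma^{-1}$ to recover the original expression up to an overall factor $\mathrm{sign}(\sigma)$ inside the norm. Your direct sign-bookkeeping on the factors $x_{\pi(i)}-x_{\pi(j)}$ is more elementary: it uses only the multilinearity and symmetry of $A$ together with the fact that $\mathrm{sign}(\pi)$ is $(-1)$ raised to the inversion count, and it does not require \eqref{eq:expandV} at all. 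The paper's approach, by contrast, makes the dependence on Theorem \ref{th:gV} uniform across all three axioms, which fits the presentation of the result as a corollary of that theorem.
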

\begin{proof} The semidefiniteness and the nonnegativity are  obvious from the definition \eqref{eq:defVmet}. To prove symmetry we apply the representation \eqref{eq:expandV} for every $\sigma \in \mathcal{P}_n$:
  \begin{align*}
    d(x_{\sigma(1)},\ldots,x_{\sigma(n)})& = \big\| \sum_{\pi\in \mathcal{P}_n} \mathrm{sign}(\pi) A\Big( \prod_{j=1}^n
      x_{\sigma(j)}^{\pi(j)-1}\Big) \big\| \\
      & = \big\| \sum_{\pi\in \mathcal{P}_n} \mathrm{sign}(\pi) A\Big( \prod_{j=1}^n
      x_{j}^{(\pi \circ \sigma^{-1})(j)-1}\Big) \big\|\\
      & = \big\|\mathrm{sign}(\sigma) \sum_{\pi\in \mathcal{P}_n} \mathrm{sign}(\pi\circ \sigma^{-1}) A\Big( \prod_{j=1}^n
      x_{j}^{(\pi \circ \sigma^{-1})(j)-1}\Big) \big\|\\
      & = \big\| \sum_{\tilde{\pi}\in \mathcal{P}_n} \mathrm{sign}(\tilde{\pi}) A\Big( \prod_{j=1}^n
      x_{j}^{\tilde{\pi}(j)-1}\Big) \big\|\\
      & = d(x_1,\ldots,x_n).
  \end{align*}
  Finally, the simplex inequality follows by taking norms in \eqref{eq:sumV} and
  using the triangle inequality.
\end{proof}
The $n$-metrics from Theorem \ref{Cprop} are special cases of Corollary \ref{cor:Vnmet}
  for $\dim(X)=\dim(Y)=1,2$ when taking the multilinear form $A$ induced by real resp. complex multiplication.

There is even an algebraic identity which generalizes \eqref{eq:sumV} and leads to
an inequality which generalizes the extended simplex estimate \eqref{eq2:gsi}.

\begin{proposition} \label{prop7:extended}
  Let $X,Y$ be vector spaces over $\K=\R,\C$, let $q \in \{1,\ldots,n\}$ and let 
  \begin{align*}
    B: X^{M_n+q-1} \longrightarrow Y, \quad M_n = \frac{1}{2}n(n-1), n \ge 2
  \end{align*}
  be an $M_n+q-1$-linear and symmetric form. Then the map
  \begin{equation} \label{eq7:Bex}
    W(x_1,\ldots,x_n,x_{n+1}) = B\Big(\prod_{1\le j < i \le n}(x_i-x_j),x_{n+1}^{q-1}\Big)
  \end{equation}
  satisfies for all $x_1,\ldots,x_n,y \in X$ the following equality
  \begin{equation} \label{eq:sumW}
    W(x_1,\ldots,x_n,y)= \sum_{i=1}^n W(x_1,\ldots,x_{i-1},y,x_{i+1},\ldots,x_n,x_i).
  \end{equation}
  If $(Y,\|\cdot\|)$ is a normed space then we have for all $x_1,\ldots,x_n,y \in X$ the inequality
  \begin{equation} \label{eq7:simpW}
    \|W(x_1,\ldots,x_n,y)\|\le \sum_{i=1}^n \|W(x_1,\ldots,x_{i-1},y,x_{i+1},\ldots,x_n,x_i)\|,
    \end{equation}
\end{proposition}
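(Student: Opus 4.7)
The plan is to follow the pattern of the proof of Theorem~\ref{th:gV}(ii), using the expansion identity~\eqref{eq:expandV}. For any fixed $z\in X$, the map
\begin{equation*}
  A_z(\xi_1,\ldots,\xi_{M_n}) := B(\xi_1,\ldots,\xi_{M_n}, z^{q-1})
\end{equation*}
is $M_n$-linear and symmetric in its first $M_n$ arguments, so Theorem~\ref{th:gV}(i) applied to $A_z$ gives
\begin{equation*}
  W(x_1,\ldots,x_n,z) \,=\, \sum_{\pi\in \mathcal{P}_n} \mathrm{sign}(\pi)\, B\Bigl(\prod_{j=1}^n x_j^{\pi(j)-1},\, z^{q-1}\Bigr).
\end{equation*}

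Applying this expansion to both sides of \eqref{eq:sumW}, the left-hand side becomes $\sum_\pi \mathrm{sign}(\pi)\, B(\prod_{j=1}^n x_j^{\pi(j)-1},\, y^{q-1})$, while the $i$-th summand on the right becomes $\sum_\pi \mathrm{sign}(\pi)\, B(y^{\pi(i)-1}\prod_{j\neq i} x_j^{\pi(j)-1},\, x_i^{q-1})$. I would then group the resulting triple sum on the right by the value $p:=\pi(i)\in\{1,\ldots,n\}$:
\begin{equation*}
  \text{RHS} \,=\, \sum_{p=1}^n \sum_{i=1}^n \sum_{\substack{\pi\in\mathcal{P}_n\\\pi(i)=p}} \mathrm{sign}(\pi)\, B\Bigl(y^{p-1}\prod_{j\neq i} x_j^{\pi(j)-1},\, x_i^{q-1}\Bigr).
\end{equation*}

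The key step is to split this according to whether $p=q$ or $p\neq q$. For $p=q$ the identity $\pi(i)=q$ makes $x_i^{q-1}=x_i^{\pi(i)-1}$, so by symmetry of $B$ each summand equals $\mathrm{sign}(\pi)\, B(\prod_{j=1}^n x_j^{\pi(j)-1},\, y^{q-1})$; summing over all $(i,\pi)$ with $\pi(i)=q$ is the same as summing over all $\pi\in\mathcal{P}_n$ (with $i=\pi^{-1}(q)$ determined), so this block reproduces the left-hand side exactly. For each $p\neq q$ I would exhibit the involution
\begin{equation*}
  (i,\pi)\;\longmapsto\;(k,\,\pi\circ\tau_{ik}), \qquad k := \pi^{-1}(q),
\end{equation*}
on $\{(i,\pi):\pi(i)=p\}$, where $\tau_{ik}$ denotes the transposition of $i$ and $k$; the hypothesis $p\neq q$ guarantees $k\neq i$, so the map is fixed-point free, and a direct check shows it is an involution.

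The main obstacle is verifying that this involution is indeed sign-reversing on the level of the summands. For the image pair $(k,\pi\circ\tau_{ik})$ the arguments of $B$ form the multiset $\{y^{p-1},\, x_k^{q-1},\, x_i^{q-1},\, \prod_{j\neq i,k} x_j^{\pi(j)-1}\}$ (the new last slot contributes $x_k^{q-1}$, and $\pi\circ\tau_{ik}(i)=\pi(k)=q$ puts an $x_i^{q-1}$ into the product), which coincides with the multiset arising at $(i,\pi)$ (where the last slot contributes $x_i^{q-1}$ and $\pi(k)=q$ puts an $x_k^{q-1}$ into the product). Hence symmetry of $B$ makes the two values equal, while $\mathrm{sign}(\pi\circ\tau_{ik})=-\mathrm{sign}(\pi)$ flips the overall sign, so the paired terms cancel. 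Only the $p=q$ block survives and \eqref{eq:sumW} follows. The inequality \eqref{eq7:simpW} is then immediate on taking norms in \eqref{eq:sumW} and applying the triangle inequality.
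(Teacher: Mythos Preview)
Your proof is correct and follows essentially the same route as the paper: both expand via \eqref{eq:expandV} applied to $A=B(\cdot,x_i^{q-1})$, group the right-hand side by $p=\pi(i)$, identify the $p=q$ block with the left-hand side, and kill each $p\neq q$ block by the sign-reversing involution coming from composition with the transposition $\tau_{\pi^{-1}(p),\pi^{-1}(q)}$. The only cosmetic difference is that the paper first collapses the double sum $\sum_i\sum_{\pi:\pi(i)=p}$ to a single sum over $\mathcal{P}_n$ (via $i=\pi^{-1}(p)$) before applying the involution on permutations, whereas you apply the same involution directly on pairs $(i,\pi)$.
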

\begin{remark} Note that \eqref{eq:sumW} and \eqref{eq7:simpW} simplify to \eqref{eq:sumV} and
  the simplex inequality in case $q=1$.
\end{remark}
\begin{proof} As in the proof of \eqref{eq:sumV} we begin with the right-hand sinde of \eqref{eq:sumW}.
  For any two indices $i,p \in \{1,\ldots,n\}$ we introduce partitionings of $\mathcal{P}_n$:
  \begin{align*}
    \mathcal{P}_n = \bigcup_{p=1}^n \mathcal{P}_{i,p}=\bigcup_{i=1}^n \mathcal{P}_{i,p}, \quad
    \mathcal{P}_{i,p}= \{\pi \in  \mathcal{P}_n: \pi(i)=p \}.
  \end{align*}
  Applying \eqref{eq:expandV} to $A=B(\cdot,x_i^{q-1})$ yields
  \begin{align*}
    \sum_{i=1}^n & W(x_1,\ldots,x_{i-1},y,x_{i+1},\ldots,x_n,x_i)\\
    &= \sum_{i=1}^n \sum_{\pi \in \mathcal{P}_n} \mathrm{sign}(\pi)
    B\Big(\prod_{j=1,j \neq i}^n x_j^{\pi(j)-1}, y^{\pi(i)-1}, x_i^{q-1}\Big)\\
    & = \sum_{i=1}^n \sum_{p=1}^n \sum_{\pi \in \mathcal{P}_{i,p}}\mathrm{sign}(\pi)
    B\Big(\prod_{j=1,j \neq i}^n x_j^{\pi(j)-1}, y^{\pi(i)-1}, x_i^{q-1}\Big)\\
    & = \sum_{i=1}^n \sum_{p=1}^n \sum_{\pi \in \mathcal{P}_{i,p}}\mathrm{sign}(\pi)
    B\Big(\prod_{j=1,\pi(j) \neq p}^n x_j^{\pi(j)-1}, x_{\pi^{-1}(p)}^{q-1}, y^{p-1}\Big)\\
    & =  \sum_{p=1}^n\left[ \sum_{i=1}^n\sum_{\pi \in \mathcal{P}_{i,p}}\mathrm{sign}(\pi)
      B\Big(\prod_{j=1,\pi(j) \neq p}^n x_j^{\pi(j)-1}, x_{\pi^{-1}(p)}^{q-1}, y^{p-1}\Big)\right] \\
    & =  \sum_{p=1}^n\left[\sum_{\pi \in \mathcal{P}_n}\mathrm{sign}(\pi)
      B\Big(\prod_{j=1,j \neq \pi^{-1}(p)}^n x_j^{\pi(j)-1}, x_{\pi^{-1}(p)}^{q-1}, y^{p-1}\Big)\right]
    =\sum_{p=1}^n S_p.
  \end{align*}
  We discuss the terms $S_p$ for $p=q$ and $p \neq q$.
  
  {\bf $p=q$:} With \eqref{eq7:Bex} and \eqref{eq:expandV} we obtain
  \begin{align*}
    S_q & = \sum_{\pi \in \mathcal{P}_n}\mathrm{sign}(\pi)
    B\Big(\prod_{j=1,j \neq \pi^{-1}(p)}^n x_j^{\pi(j)-1}, x_{\pi^{-1}(p)}^{\pi(\pi^{-1}(p))-1}, y^{q-1}\Big)\\
    & =  \sum_{\pi \in \mathcal{P}_n}\mathrm{sign}(\pi)
    B\Big(\prod_{j=1}^n x_j^{\pi(j)-1}, y^{q-1}\Big)= W(x_1,\ldots,x_n,y).
  \end{align*}

  {\bf $p \neq q$:} We are going to show $S_p=0$ so that our assertion follows from the case above.
  For a given $\pi \in \mathcal{P}_n$ we set $\ell=\pi^{-1}(p)$, $k=\pi^{-1}(q)$ and $\tilde{\pi}=\pi \circ \tau_{k \ell}$.
  By assumption we have $k \neq \ell$ and further
  \begin{align*}
    \mathrm{sign}(\tilde{\pi})= - \mathrm{sign}(\pi), \quad \tilde{\pi}(\ell)=\pi(k)=q, \quad \tilde{\pi}(k)= \pi(\ell)=p.
  \end{align*}
  Next we evaluate the summand in $S_p$ which belongs to $\tilde{\pi}$:
  \begin{align*}
   & \mathrm{sign}(\tilde{\pi})B \Big( \prod_{j=1,j \neq \tilde{\pi}^{-1}(p)}^n x_j^{\tilde{\pi}(j)-1},x_{\tilde{\pi}^{-1}(p)}^{q-1},y^{p-1}\Big)\\
    &=-\mathrm{sign}(\pi)B \Big( \prod_{j=1 , j \neq k}^n x_j^{\tilde{\pi}(j)-1},x_k^{\pi(k)-1},y^{p-1}\Big)\\
    & = -\mathrm{sign}(\pi)B \Big( \prod_{j=1 , j \neq \ell}^n x_j^{\pi(j)-1},x_{\ell}^{q-1},y^{p-1}\Big)\\
    &=  -\mathrm{sign}(\pi)B \Big( \prod_{j=1 , j \neq \pi^{-1}(p)}^n x_j^{\pi(j)-1},x_{\pi^{-1}(p)}^{q-1},y^{p-1}\Big).
  \end{align*}
  The latter is the negative of the summand which belongs to $\pi$. Hence, all these terms cancel in $S_p$ and
  the proof is complete.
\end{proof}
\subsection{ Definite $n$-metrics of Vandermonde type}
\label{sec3.3}
In this section we set up a  multilinear map which satisfies the assumptions of Corollary \ref{cor:Vnmet} and we analyze when definiteness holds.
\begin{example} \label{eq7:allprod}
  Let $C_k:(\R^2)^k \to \R^2$ denote the real $k$-linear form induced by complex multiplication,  i.e.
  \begin{equation*} \label{eq7:compmult}
    \begin{aligned}
      C_k(z_1,\ldots,z_k) &= J^{-1}\big(\prod_{j=1}^k J(z_j)\big), \quad \text{where} \\
      J\colon \R^2 \to \C, \;J(z_1,z_2) &= z_1 + i z_2, \quad z=(z_1,z_2) \in \R^2.
    \end{aligned}
  \end{equation*}
  Further consider the set of ordered pairs in $\{1,\ldots,m\}$
  \begin{align*}
    T_m = \{ \tau \in \{1,\ldots,m\}^2: \tau_1 < \tau_2 \}, \quad |T_m| = \frac{m(m-1)}{2}=M_m,
  \end{align*}
  and for $\tau \in T_m$ define the projection
  \begin{align*}
    P_{\tau}\colon \R^m \to \R^2, \quad P_{\tau}(x)= \begin{pmatrix} x_{\tau_1} \\ x_{\tau_2} \end{pmatrix}.
  \end{align*}
      With these preparations we define the $M_n$-linear map $A:(\R^m)^{M_n} \to \R^{2 M_m}$ by
  \begin{equation*} \label{eq7:Adef}
    A(x_1,\ldots,x_{M_n})_{\tau}= C_{M_n}(P_{\tau}x_1,\ldots,P_{\tau}x_{M_n}), \quad \tau \in T_m.
  \end{equation*}
 Since $C_{M_n}$ is symmetric, the same follows for the map $A$ :
  \begin{align*}
    A(x_{\pi(1)},\ldots,x_{\pi(M_n)})_{\tau} &= C_{M_n}((P_{\tau}x_{\pi(1)},\ldots,P_{\tau}x_{\pi(M_n)}) \\
    & =C_{M_n}(P_{\tau}x_{1},\ldots,P_{\tau}x_{M_n})=A(x_1,\ldots,x_{M_n})_{\tau}.
  \end{align*}
  Hence Corollary \ref{cor:Vnmet} applies and  any norm $\|\cdot\|$ in $\R^{2 M_m}$ leads to a pseudo $n$-metric
  in $\R^m$ via
  \begin{equation} \label{eq7:defnmpseudo}
    d(x_1,\ldots,x_n)= \big\| A\big( \prod_{1\le j <i\le n}(x_i-x_j) \big) \big\|,
  \end{equation}
  \begin{proposition} \label{s3:prop2}
 The equation \eqref{eq7:defnmpseudo}  defines a $3$-metric in case  $m \ge n=3$. 
  \end{proposition}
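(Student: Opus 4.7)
The plan is to invoke Corollary~\ref{cor:Vnmet}, which already guarantees that \eqref{eq7:defnmpseudo} satisfies the pseudo $n$-metric axioms $(M_1)$--$(M_3)$, so only the implication ``$d(x_1,x_2,x_3)=0 \Rightarrow$ two arguments coincide'' remains. Set $u = x_2 - x_1$, $v = x_3 - x_1$, $w = x_3 - x_2$. Since $A$ is $3$-linear and symmetric and the $\tau$-component of $A(u,v,w)$ equals $C_3(P_\tau u, P_\tau v, P_\tau w) = J^{-1}(J(P_\tau u)\,J(P_\tau v)\,J(P_\tau w))$, and since $\C$ has no zero divisors and $\|\cdot\|$ is a genuine norm, the hypothesis $d(x_1,x_2,x_3) = 0$ translates into the combinatorial condition: for every pair $\tau = (k,\ell)$ with $1 \le k < \ell \le m$, at least one of the three vectors $P_\tau u$, $P_\tau v$, $P_\tau w$ vanishes in $\R^2$, i.e., at least one of $u, v, w$ has both its $k$-th and $\ell$-th coordinates equal to zero.

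I then argue by contradiction, assuming $u, v, w$ are all nonzero, and work with their supports $S(u), S(v), S(w) \subseteq \{1,\ldots,m\}$. First, the triple intersection must be empty: if $i_0 \in S(u) \cap S(v) \cap S(w)$ then for any $j \ne i_0$ the pair $\{i_0, j\}$ is uncovered because no $z \in \{u,v,w\}$ vanishes at $i_0$. Next, I rule out pairwise overlaps: if $i_{uv} \in S(u) \cap S(v)$, then $w_{i_{uv}} = 0$ by the previous step, and choosing any $i_w \in S(w)$ (which is automatically distinct from $i_{uv}$) yields a pair $\{i_{uv}, i_w\}$ covered by none of $u, v, w$, a contradiction. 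Symmetric arguments give $S(u) \cap S(w) = \emptyset$ and $S(v) \cap S(w) = \emptyset$.

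The final step exploits the linear dependence $w = v - u$: for any $i \in S(u)$, pairwise disjointness forces $v_i = 0$ and $w_i = 0$, so $0 = w_i = v_i - u_i = -u_i$ and hence $u_i = 0$, contradicting $i \in S(u)$. Therefore at least one of $u, v, w$ must vanish, meaning two of the points $x_1, x_2, x_3$ coincide, which establishes definiteness. The main delicate point I anticipate is the support-overlap bookkeeping in the middle step; once the absence of zero divisors in $\C$ has been converted into the combinatorial coverage condition, the geometric rigidity follows rapidly from the affine relation $w = v - u$.
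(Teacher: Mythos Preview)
Your argument is correct. Both you and the paper start from the same reduction: by Corollary~\ref{cor:Vnmet} only definiteness needs checking, and since $\C$ has no zero divisors the hypothesis $d(x_1,x_2,x_3)=0$ becomes the combinatorial coverage condition that every coordinate pair $\{k,\ell\}$ lies in the zero set of at least one of $u=x_2-x_1$, $v=x_3-x_1$, $w=x_3-x_2$. From there the two proofs diverge. The paper fixes a choice function $j:T_m\to T_3$ recording which difference vanishes on each pair, collects the indices into sets $\sigma_{12},\sigma_{13},\sigma_{23}$, and runs an extremal argument (pick the $\sigma$ of largest cardinality, find an index outside it, analyze the resulting pair) to force one additional zero coordinate via the relation $(x_2-x_1)=(x_3-x_1)-(x_3-x_2)$. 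You instead work directly with the supports $S(u),S(v),S(w)$ and show in two short steps that they are pairwise disjoint; then the affine dependence $w=v-u$ kills $S(u)$ in a single line. Your organization is cleaner and avoids the choice function and the cardinality bookkeeping; the paper's route is more hands-on but reaches the same use of the linear relation at the end. One tiny point worth making explicit in your write-up: the step ``pick $j\ne i_0$'' in your triple-intersection argument uses $m\ge 2$, which of course follows from the hypothesis $m\ge 3$.
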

  \begin{proof} Assume $d(x_1,x_2,x_3)=0$, i.e.
  $A(x_2-x_1, x_3-x_2,x_3-x_1)_{\tau}=0$ for all $\tau \in T_m$. By the definiteness of complex multiplication we obtain
  that there exists a map $j:T_m \to T_3$, $\tau \mapsto j(\tau)=(j_1,j_2)(\tau)$ such that $ P_{\tau}(x_{j_2(\tau)}-x_{j_1(\tau)})=0$
  for all $\tau \in T_m$
   Define the index sets
  \begin{align*}
    \sigma_{12}&=\{\tau_1,\tau_2: j(\tau)=(1,2), \tau \in T_m\},
    \quad \sigma_{13}=\{\tau_1,\tau_2: j(\tau)=(1,3), \tau \in T_m\},\\
    \quad \sigma_{23}&=\{\tau_1,\tau_2: j(\tau)=(2,3), \tau \in T_m\}.
  \end{align*}
  By assumption we have $\sigma_{12}\cup \sigma_{13} \cup \sigma_{23} = \{1,\ldots,m\}$. If one of these sets
  equals $\{1,\ldots,m\}$ then one of the differences $x_2-x_1$, $x_3-x_1$, $x_3-x_2$ vanishes and our assertion
  follows. Hence we can assume
  $\sigma_{12},\sigma_{13}, \sigma_{23} \neq \{1,\ldots,m\}$.  W.l.o.g. let $\sigma_{12}$ have  the maximum
  cardinality, i.e. $|\sigma_{12}| \ge |\sigma_{13}|,|\sigma_{23}|$. Now take any $\tau_1\notin \sigma_{12}$
  and note that such an index exists.
  Then $\tau_1$ must lie in $\sigma_{13}$ or $\sigma_{23}$. If $\tau_1 \in \sigma_{13}$ we claim there exists
  $\tau_2\in \sigma_{12}$ with $\tau_2 \notin \sigma_{13}$, for otherwise $\sigma_{12}\subseteq \sigma_{13}$
  and $|\sigma_{13}|\ge |\sigma_{12}|+1$ (since $\tau_1 \notin \sigma_{12}$, $\tau_1 \in \sigma_{13}$) leads to a contradiction.
  Similarly, if $\tau_1 \in \sigma_{23}$ there exists $\tau_2 \in \sigma_{12}$ with $\tau_2 \notin \sigma_{23}$.
  In the following assume w.l.o.g. that $\tau_1\in \sigma_{13}$ holds. Since the pair $\tau_1,\tau_2$ lies neither
  in $\sigma_{12}$ nor in $\sigma_{13}$ we have $\tau_1,\tau_2 \in \sigma_{23}$. In particular, we have the following
  zero components
  \begin{align*}
    0=(x_2-x_1)_{\tau_2}= (x_3-x_1)_{\tau_1}= (x_3-x_2)_{\tau_1}=(x_3-x_2)_{\tau_2}.
  \end{align*}
  This implies $(x_2-x_1)_{\tau_1}= (x_3-x_1)_{\tau_1}- (x_3-x_2)_{\tau_1}=0$ for all $\tau_1 \notin
  \sigma_{12}$. 
    Since $(x_2-x_1)_{\tau}=0$ holds for all $\tau \in \sigma_{12}$ we have shown $x_2-x_1=0$.
  \end{proof}
  The proof shows that the  question of definiteness for general values of $n,m$ leads to a combinatorial problem. We have no general result for this problem, but note that
in case $n=m=4$, definiteness does not hold as the following counterexample shows:
  \begin{align*}
    \begin{pmatrix} x_1 & x_2 & x_3 & x_4 \end{pmatrix}&=
    \begin{pmatrix} 0 & 0 & 0 & 1 \\ 0 & 0 & 1 & 0 \\ 0 & 0 & 1 & 1 \\ 0& -1 & 0 & 0 \end{pmatrix},
  \end{align*}
  \begin{align*}
   ( x_2-x_1 \quad x_3-x_1 \quad x_3 -x_2 \quad & x_4-x_1 \quad x_4-x_2 \quad x_4-x_3 )\\
   & = \begin{pmatrix} 0 & 0 & 0 &1 & 1 & 1 \\ 0 & 1 & 1 & 0 & 0 & -1 \\
      0 & 1 & 1 & 1 & 1 &0 \\- 1 & 0 &1 & 0 & 1& 0 \end{pmatrix}.
  \end{align*}
  For every pair $1 \le j < i \le 4$ there exists a column with $i$-th and $j$-th entry zero.
  Therefore $d(x_1,x_2,x_3,x_4)=0$ holds but the vectors $x_1,x_2,x_3,x_4$ are pairwise distinct.
\end{example}
\subsection{An application to linear differential equations}
\label{sec3.4}
We conclude with a simple estimate of the $3$-metric \eqref{s1:e4}
for the solutions of a linear ODE system
\begin{equation} \label{eq3:ODE}
  \dot{x}(t) = A(t) x(t), \quad t \ge 0.
\end{equation}
Let  $A:[0,\infty) \to \R^{m,m}$ be a continuous matrix function satisfying
  \begin{equation*} \label{eq3:Aest}
    \langle A(t)x,x\rangle \le \alpha(t) \langle x, x \rangle \quad \forall t\ge 0, x \in \R^m
  \end{equation*}
  for some continuous function $\alpha:[0,\infty) \to \R$.
  Let  $x_j(t)$, $t\ge 0$ be the solutions of \eqref{eq3:ODE} with initial
  data $x_j(0)=x_j^0$, $j=1,2,3$.
  Then the following estimate holds for the $3$-metric
  \begin{equation} \label{eq3:solest}
    d_3(x_1(t),x_2(t),x_3(t)) \le \exp \left( 3 \int_0^t \alpha(s) ds\right) d_3(x_1^0,x_2^0,x_3^0), \quad t \ge 0.
  \end{equation}
  In case $d_3(x_1^0,x_2^0,x_3^0)=0$ the estimate \eqref{eq3:solest} is obvious since then
  $d_3(x_1(t),x_2(t),x_3(t))=0$ holds for all $t \ge 0$ by the definiteness of $d_3$ and the unique solvability
  of the initial value problem. If $d_3(x_1^0,x_2^0,x_3^0)>0$ we obtain by differentiation
  \begin{align*}
    \frac{d}{dt}d_3(x_1 ,x_2 ,x_3 )& = \sum_{1\le \ell < k\le 3}\frac{d}{dt}\|x_k -x_{\ell} \|
    \prod_{ 1 \le j < i \le 3 \atop  (k,\ell)\neq (i,j) } \|x_i -x_j \| \\
    & =\sum_{1\le \ell < k\le 3}\frac{\langle A(t) (x_k -x_{\ell} ),x_k -x_{\ell}  \rangle}{\|x_k -x_{\ell} \|}
    \prod_{ 1 \le j < i \le 3 \atop  (k,\ell)\neq (i,j) } \|x_i -x_j \| \\
    & \le \sum_{1\le \ell < k\le 3} \alpha(t) \|x_k -x_{\ell}\| 
    \prod_{ 1 \le j < i \le 3 \atop  (k,\ell)\neq (i,j) } \|x_i -x_j \| = 3 \alpha(t) d_3(x_1,x_2,x_3).
  \end{align*}
 Inequality \eqref{eq3:solest} then follows by a Gronwall estimate.

\section*{Acknowledgments}
I thank Th. H\"uls and J.-L. Marichal for pointing out some references to me and,
 furthermore, Th. H\"uls for support with the figures.
\bigskip

\end{document}